\providecommand{\U}[1]{\protect\rule{.1in}{.1in}}
\newtheorem{theorem}{Theorem}[section]
\newtheorem{corollary}[theorem]{Corollary}
\newtheorem{example}[theorem]{Example}
\newtheorem{examples}[theorem]{Examples}
\newtheorem{remark}[theorem]{Remark}
\newtheorem{lemma}[theorem]{Lemma}
\newtheorem{final remark}[theorem]{Final Remark}
\newtheorem{definition}[theorem]{Definition}
\newcommand{\sol}[1]{\text{sol}(#1)}
\newcommand {\cvfe} {\overset{\omega^\ast}{\rightarrow}}
\newcommand {\N} {\mathbb{N}}
\newcommand{\norma}[1]{\| #1 \|}
\newcommand{\conj}[2]{\left \{ {#1} \, : \, {#2} \right \}}
\begin{document}

\title{Disjoint $p$-convergent operators and their adjoints}
\author{Geraldo Botelho\thanks{Supported by Fapemig grants PPM-00450-17,  RED-00133-21 and APQ-01853-23.}\, ,  Luis Alberto Garcia and  Vinícius C. C. Miranda\thanks{Supported by CNPq Grant 150894/2022-8 and Fapemig grant APQ-01853-23\newline 2020 Mathematics Subject Classification: 46B42, 47B65.\newline Keywords: Banach lattices, positive operators, disjoint $p$-convergent operators, order weakly compact operators. }}
\date{}
\maketitle

\begin{abstract} First we give conditions on a Banach lattice $E$ so that an operator $T$ from $E$ to any Banach space is disjoint $p$-convergent if and only if $T$ is almost Dunford-Pettis. Then we study when adjoints of positive operators between Banach lattices are disjoint $p$-convergent. For instance, we prove that the following conditions are equivalent for all Banach lattices $E$ and $F$: (i) A positive operator $T \colon E \to F$ is almost weak $p$-convergent if and only if  $T^*$ is disjoint $p$-convergent; (ii) $E^*$ has order continuous norm or $F^*$ has the positive Schur property of order $p$. Very recent results are improved, examples are given and applications of the main results are provided.

\end{abstract}

\section{Introduction and background}

Recall that a linear operator between Banach spaces is said to be completely continuous, or a Dunford-Pettis operator, if it sends weakly null sequences to norm null sequences. Notions alike have been considered in the mathematical literature. For instance, in \cite{sanchez} a linear operator from a Banach lattice to a Banach space is defined to be almost Dunford-Pettis if it sends disjoint weakly
null sequences to norm null sequences; or equivalently, if it sends positive disjoint weakly null sequences to norm null sequences. Almost Dunford-Pettis operators have attracted the attention of many experts, for recent developments see \cite{aqzelbour, aqz, ardakani, galmir}. Summability properties also play a major role in the subject, for example, %the notion of $p$-convergent operators, or a Dunford-Pettis operator of order $p$, was introduced in \cite{castillo}. Letting
for $1 \leq p < \infty$, in \cite{castillo} a linear operator between Banach spaces is defined to be $p$-convergent (or to be a Dunford-Pettis operator of order $p$) if it sends weakly $p$-summable sequences to norm null sequences. The lattice counterpart of this class was introduced in \cite{fourie}: a linear operator from a Banach lattice to a Banach space is disjoint $p$-convergent if it sends disjoint weakly $p$-summable sequences to norm null sequences; or equivalently if it sends positive disjoint weakly $p$-summable sequences to norm null sequences. Such operators were recently studied in \cite{ali, ardakani2, ardakani}. In the latter reference, disjoint $p$-convergent operators were studied under the name of almost $p$-convergent operators. In this note we follow the nomenclature coined by Zeekoei and Fourie \cite{fourie}.

Since weakly $p$-summable sequences are weakly null, every almost Dunford-Pettis operator is disjoint $p$-convergent. The converse is not true: for $1<p\leq\frac{3}{2}$, $p^\ast := p/(p-1)$ and  $\frac{3}{2}<q<3$, we have $3\leq p^{\ast}$, so  $\ell_{q}$ has the Schur property of order $p$ by \cite[Example 3.9]{fourie}, that is, the identity operator ${\rm id}_{\ell_{q}}\colon \ell_{q}\rightarrow\ell_{q}$ is a disjoint $p$-convergent operator. Working with the canonical unit vectors $(e_n)_n$ we get that it is not not almost Dunford-Pettis. In Section \ref{sec-adp} we investigate conditions under which disjoint $p$-convergent operators are almost Dunford-Pettis. In particular, we prove that if the dual $E^*$ of the Banach lattice $E$ has cotype $p$ or $E$ has the disjoint property of order $p$ (meaning that bounded disjoint $E$-valued sequences are weakly $p$-summable), then every disjoint $p$-convergent operator from $E$ to any Banach space is almost Dunford-Pettis. In Section 3, we focus on conditions so that the adjoint of an operator is disjoint $p$-convergent. The first result characterizes when the adjoint of almost limited operators are disjoint $p$-convergent. After proving several characterizations of operators having disjoint $p$-convergent adjoints, we prove a result on adjoints of order weakly compact operators which generalizes a result proved recently in  \cite{ali}. In the final result we show that the following conditions are equivalent for all Banach lattices $E$ and $F$: (i) A positive operator $T \colon E \to F$ is almost weak $p$-convergent if and only if  $T^*$ is disjoint $p$-convergent; (ii) $E^*$ has order continuous norm or $F^*$ has the positive Schur property of order $p$. The relationships between this result and \cite[Theorem 5.11]{ali} are discussed.

We refer the reader to \cite{alip, meyer} for background on Banach lattices and to \cite{fabian} for Banach space theory. Throughout this paper, $X, Y$ denote Banach spaces and $E, F$ denote Banach lattices. We denote by $B_X$ the closed unit ball of $X$ and by $X^*$ its topological dual. For a subset $A \subseteq E$, $\sol{A}$ denotes the solid hull of $A$ and $E^+$ denotes the positive cone of $E$. Unless said otherwise $p$ and $q$ will denote real numbers such that $p,q \geq 1$ and for every $1 \leq p < \infty$, we denote its conjugate by $p^\ast$, that is, $1/p + 1/p^\ast = 1.$ By ${\rm id}_X$ we mean the identity operator on $X$. Linear operators are always assumed to be bounded.

\section{When every disjoint $p$-convergent is almost Dunford-Pettis} \label{sec-adp}

As announced in the Introduction, the purpose of this section
is to give conditions on a Banach lattice $E$ so that  every disjoint $p$-convergent operator defined on $E$ is almost Dunford-Pettis. In order to do so we recall some terminology and prove some preliminaries results.
Given an operator $T \colon E \to X$, according to  \cite[p.\,192]{meyer} consider the lattice seminorm $q_T$ on $E$ defined by
$$ q_T(x) = \sup \conj{\norma{T(y)}}{|y| \leq |x|}, \,x \in E.$$
We have $\norma{T(x)} \leq q_T(x) \leq \norma{T} \cdot \norma{x}$ for every $x \in E$ (see \cite[Section 2]{aqzelbour}).
We begin by characterizing disjoint $p$-convergent  operators on $E$ by means of $q_T$.

\begin{theorem} \label{teo1} The following are equivalent for a linear operator $T\colon E \to X$: \\
    {\rm (a)} $T$ is disjoint $p$-convergent. \\
    {\rm (b)} $\norma{T(x_n)} \longrightarrow 0$ for every positive disjoint weakly $p$-summable sequence $(x_n)_n$ in $E$. \\
    {\rm (c)} $q_T(x_n) \longrightarrow 0$ for every positive disjoint weakly $p$-summable sequence $(x_n)_n$ in $E$. \\
    {\rm (d)} $q_T(x_n) \longrightarrow 0$ for every disjoint weakly $p$-summable sequence $(x_n)_n$ in $E$.
\end{theorem}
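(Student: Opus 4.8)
The plan is to prove the chain of implications (a) $\Rightarrow$ (b) $\Rightarrow$ (c) $\Rightarrow$ (d) $\Rightarrow$ (a), since the last three are essentially "upgrades" of (b) and the passage back to (a) is immediate. The implication (a) $\Rightarrow$ (b) is trivial because positive disjoint weakly $p$-summable sequences are in particular disjoint weakly $p$-summable. The implication (d) $\Rightarrow$ (a) is also immediate from the inequality $\norma{T(x)} \leq q_T(x)$ recorded just before the statement: if $(x_n)_n$ is any disjoint weakly $p$-summable sequence then $q_T(x_n) \to 0$ forces $\norma{T(x_n)} \to 0$.

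The substantive content is (b) $\Rightarrow$ (c) and (c) $\Rightarrow$ (d). For (b) $\Rightarrow$ (c), given a positive disjoint weakly $p$-summable sequence $(x_n)_n$, I need to show $q_T(x_n) \to 0$. By definition of $q_T$ there exist, for each $n$, vectors $y_n$ with $|y_n| \leq x_n$ and $\norma{T(y_n)} \geq q_T(x_n) - 1/n$. The key observation is that the sequence $(|y_n|)_n$ is again positive, disjoint (since $|y_n| \leq x_n$ and the $x_n$ are disjoint, so the $|y_n|$ are disjoint), and weakly $p$-summable: indeed $0 \leq |y_n| \leq x_n$ and domination by a positive weakly $p$-summable sequence passes to weakly $p$-summability — this is where one invokes the standard fact (cf.\ the lattice structure of weakly $p$-summable sequences) that $0 \leq u_n \leq v_n$ with $(v_n)_n$ weakly $p$-summable implies $(u_n)_n$ weakly $p$-summable. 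Hence by (b), $\norma{T(|y_n|)} \to 0$, and similarly $\norma{T(y_n^+)} \to 0$ and $\norma{T(y_n^-)} \to 0$ since $0 \leq y_n^\pm \leq |y_n| \leq x_n$; therefore $\norma{T(y_n)} \leq \norma{T(y_n^+)} + \norma{T(y_n^-)} \to 0$, which gives $q_T(x_n) \to 0$.

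For (c) $\Rightarrow$ (d), take an arbitrary (not necessarily positive) disjoint weakly $p$-summable sequence $(x_n)_n$. Then $(x_n^+)_n$ and $(x_n^-)_n$ are positive, disjoint (disjointness of $(x_n)_n$ makes $x_n^+, x_n^-$ and the whole family mutually disjoint), and each is weakly $p$-summable because $0 \leq x_n^\pm \leq |x_n|$ and $(|x_n|)_n$ is weakly $p$-summable (again using domination, plus the fact that $(|x_n|)_n$ is weakly $p$-summable whenever $(x_n)_n$ is — a standard property of disjoint sequences, or more simply of the absolute value in the weakly $p$-summable setting). Applying (c) to each gives $q_T(x_n^+) \to 0$ and $q_T(x_n^-) \to 0$, and since $q_T$ is a lattice seminorm with $q_T(x_n) \leq q_T(x_n^+) + q_T(x_n^-)$, we conclude $q_T(x_n) \to 0$.

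The main obstacle, such as it is, is making sure the domination principle for weakly $p$-summable sequences is correctly applied: one must confirm that $0 \leq u_n \leq v_n$ for all $n$ with $(v_n)_n \in \ell_p^w(E)$ implies $(u_n)_n \in \ell_p^w(E)$, which follows by testing against positive functionals and using that a functional controls the whole sequence via its modulus. Everything else is bookkeeping with the elementary inequalities $\norma{T(x)} \leq q_T(x) \leq \norma{T}\norma{x}$ and the sublinearity of $q_T$ together with $q_T(x) = q_T(|x|)$.
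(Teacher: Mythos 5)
Your proof is correct, but it is routed differently from the paper's. The paper quotes the equivalence (a)$\Leftrightarrow$(b) from \cite[Proposition 4.6]{fourie} and then closes the circle via (a)$\Rightarrow$(d)$\Rightarrow$(c)$\Rightarrow$(b), so the only substantive work is (a)$\Rightarrow$(d): pick $y_n$ with $|y_n|\leq|x_n|$ nearly attaining $q_T(x_n)$, use \cite[Proposition 2.2]{fourie} to get $(|x_n|)_n$ weakly $p$-summable, and apply disjoint $p$-convergence to $(y_n^{\pm})_n$. You instead prove the full cycle (a)$\Rightarrow$(b)$\Rightarrow$(c)$\Rightarrow$(d)$\Rightarrow$(a), which in particular gives a self-contained proof of the nontrivial implication (b)$\Rightarrow$(a) rather than importing it; the price is that you must supply the domination lemma ($0\leq u_n\leq v_n$ with $(v_n)_n$ weakly $p$-summable implies $(u_n)_n$ weakly $p$-summable), which you correctly justify by testing against $|x^*|$, and you still need the modulus fact for disjoint sequences in (c)$\Rightarrow$(d), which is exactly \cite[Proposition 2.2]{fourie}. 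So your argument is marginally more self-contained, while the paper's is shorter because it leans on the cited equivalence. One caution: your parenthetical suggestion that the modulus of a weakly $p$-summable sequence is weakly $p$-summable ``more simply,'' i.e.\ without disjointness, is false in general --- the Rademacher functions in $L_2[0,1]$ are weakly $2$-summable, yet their moduli are constantly $\mathbf{1}$, which is not weakly $p$-summable for any finite $p$; disjointness is essential there, so keep the citation to the disjoint case.
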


\begin{proof}
    The equivalence (a)$\Leftrightarrow$(b) follows from \cite[Proposition 4.6]{fourie}.

    (a)$\Rightarrow$(d) Let $(x_n)_n$ be a disjoint weakly $p$-summable sequence in $E$. By the definition of $q_T$, there exists a sequence $(y_n)_n \subseteq E$ such that $|y_n| \leq |x_n|$ and $q_T(x_n) \leq 2 \norma{T(y_n)}$ for every $n \in \N$. As $(x_n)_n$ is a disjoint weakly $p$-summable sequence, the sequence $(|x_n|)_n$ is weakly $p$-summable as well by \cite[Proposition 2.2]{fourie}. Since the sequences $(y_n^+)_n$ and $(y_n^-)_n$ are also disjoint weakly $p$-summable, we have
    $$ q_T(x_n) \leq 2 \norma{T(y_n)} \leq 2 (\norma{T(y_n^+)} + \norma{T(y_n^-)}) \longrightarrow 0. $$

   The implication  (d)$\Rightarrow$(c) is immediate. And (c)$\Rightarrow$(b) is easy because if $(x_n)_n$ is a positive disjoint weakly $p$-summable sequence in $E$, then
    $ \norma{T(x_n)} \leq q_T(x_n) \longrightarrow 0. $
\end{proof}

Let us explain our strategy to prove that every disjoint $p$-convergent is almost Dunford-Pettis. Recall that a linear operator $T\colon E \to X$ is said to be order weakly compact whenever $T([0,x])$ is a relatively  weakly  compact subset of $X$ for every $x \in E^+$, or, equivalently, if $\norma{T(x_n)} \longrightarrow 0$ for every order bounded disjoint sequence $(x_n)_n \subset E$ (see \cite[Theorem 5.57]{alip}).

%for each $f = (f_j) \in \ell_1 = c_0'$, $$ \sum_{j=1}^\infty |f(e_n)|^p = \sum_{j=1}^\infty |f_j|^p < \infty $$
%Thus, not every order weakly compact operator is disjoint $p$-convergent. Nevertheless, as we claimed before, every disjoint $p$-convergent operator is order weakly compact:

\begin{lemma} \label{lema1} Disjoint $p$-convergent operators are order weakly compact.
    \end{lemma}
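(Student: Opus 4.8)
The plan is to show that every disjoint $p$-convergent operator $T \colon E \to X$ is order weakly compact by using the sequential characterization of order weak compactness recalled just above the statement: it suffices to prove that $\norma{T(x_n)} \longrightarrow 0$ for every order bounded disjoint sequence $(x_n)_n \subset E$. So fix such a sequence, say $|x_n| \leq x$ for all $n$ and some $x \in E^+$.

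First I would reduce to the positive case. Since $(x_n)_n$ is disjoint and order bounded, so are $(x_n^+)_n$ and $(x_n^-)_n$ (both dominated by $x$), and $\norma{T(x_n)} \leq \norma{T(x_n^+)} + \norma{T(x_n^-)}$, so it is enough to treat a positive disjoint order bounded sequence. The key classical fact I would invoke is that an order bounded disjoint sequence in a Banach lattice is weakly null with respect to... no, more precisely: a positive order bounded disjoint sequence $(x_n)_n$ with $0 \leq x_n \leq x$ satisfies, for every $\varphi \in (E^*)^+$, that $\sum_n \varphi(x_n) \leq \varphi(x) < \infty$, hence $\sum_n |\psi(x_n)| < \infty$ for every $\psi \in E^*$ by decomposing $\psi = \psi^+ - \psi^-$. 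In particular $(x_n)_n$ is weakly $1$-summable, and therefore weakly $p$-summable for every $p \geq 1$ (since $\ell_1 \hookrightarrow \ell_p$). Thus $(x_n)_n$ is a positive disjoint weakly $p$-summable sequence, and disjoint $p$-convergence of $T$ gives $\norma{T(x_n)} \longrightarrow 0$ directly; combined with the reduction above, this yields order weak compactness.

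Alternatively, and perhaps cleaner given Theorem \ref{teo1}, I would phrase it via $q_T$: by the equivalence (a)$\Leftrightarrow$(d) in Theorem \ref{teo1}, $T$ disjoint $p$-convergent gives $q_T(x_n) \longrightarrow 0$ for every disjoint weakly $p$-summable sequence, and since $\norma{T(x_n)} \leq q_T(x_n)$ this handles the not-necessarily-positive order bounded disjoint sequence in one stroke once we know such a sequence is weakly $p$-summable.

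The only real point requiring care — the ``main obstacle,'' though it is standard — is the claim that an order bounded disjoint sequence is weakly $1$-summable (equivalently weakly $p$-summable). This is a well-known consequence of the fact that positive disjoint sequences dominated by a fixed element have summable images under positive functionals; it can be cited from \cite[Proposition 2.2]{fourie} or the relevant passage of \cite{meyer}/\cite{alip}, or proved in two lines as above. Everything else is the routine lattice bookkeeping (splitting into positive and negative parts) already used in the proof of Theorem \ref{teo1}, so no genuinely new difficulty arises.
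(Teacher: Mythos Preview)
Your proposal is correct and takes essentially the same approach as the paper: show that an order bounded disjoint sequence is weakly $1$-summable (hence weakly $p$-summable) and then apply disjoint $p$-convergence directly. The paper's version is marginally shorter, bypassing your reduction to positive sequences by computing $\sum_{k=1}^n |x^\ast(x_k)| \leq \sum_{k=1}^n |x^\ast|(|x_k|) = |x^\ast|\big(\bigvee_{k=1}^n |x_k|\big) \leq |x^\ast|(x)$ for arbitrary $x^\ast \in E^\ast$.
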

\begin{proof} Let $T\colon E \to X$ be a disjoint $p$-convergent operator and let
     $(x_n)_n$ be an order bounded disjoint sequence in $E$, say $|x_n| \leq x \in E$ for every $n \in \N$. For each $x^\ast \in E^\ast$,
    $$ \sum_{k=1}^n |x^\ast(x_n)| \leq \sum_{k=1}^n |x^\ast|(|x_n|) = |x^\ast| \left ( \displaystyle\bigvee_{k=1}^n |x_n| \right ) \leq |x^\ast|(x), $$
     so $(x^\ast(x_n)) \in \ell_1$ for every $x^\ast \in E^\ast$, proving that  $(x_n)_n$ is weakly $p$-summable. Since $T$ is a disjoint $p$-convergent operator, we have $\norma{T(x_n)} \longrightarrow 0$, thus $T$ is order weakly compact.
\end{proof}

The converse of the lemma above is not true:

\begin{example}\rm Since $c_0$ has order continuous norm, ${\rm id}_{c_0}$ is order weakly compact \cite[p.\,319]{alip}. However, ${\rm id}_{c_0}$ is not disjoint $p$-convergent for any $1 \leq p < \infty$: the canonical unit vectors  $(e_n)_n$ form a disjoint weakly $p$-summable sequence in $c_0$ but $\norma{{\rm id}_{c_0}(e_n)} = \norma{e_n} = 1$ for every $n \in \N$.
\end{example}

It was proved in \cite[Theorem 2.2]{aqzelbour} that a linear operator $T\colon E \to X$ is almost Dunford-Pettis if and only if $T$ is order weakly compact and each relatively weakly compact  subset $W$ of $E$ is $q_T$-almost order bounded, meaning that for every $\varepsilon > 0$ there exists $u \in E^+$ such that $q_T((|y| - u)^+) \leq \varepsilon$ for every $y \in W$. Therefore, according to Lemma \ref{lema1}, in order to prove that every disjoint $p$-convergent operator  $T$ on a Banach lattice $E$ is almost Dunford-Pettis, it is enough to check that every relatively weakly compact  subset of $E$ is $q_T$-almost order bounded. This will be our strategy. We shall give two conditions on $E$ under any of them the desired implication holds. To describe the first condition, recall that a Banach space $X$ has type $q$, $1 \leq q \leq 2$ (cotype $p$, $2 \leq p < \infty$, respectively), if there is a constant $C > 0$ such that for every finite subset $\{x_1, \dots, x_n \} \subseteq X$,
$$ \left ( \int_0^1 \left\|\sum_{k=1}^n r_k(t) x_k\right\|^2  \right)^{1/2} \leq C\cdot \left ( \sum_{k=1}^n \norma{x_k}^q \right )^{1/q},$$
$$\left ( \sum_{k=1}^n \norma{x_k}^p \right )^{1/p} \leq C \left ( \int_0^1 \left\|\sum_{k=1}^n r_k(t) x_k\right\|^2 \right )^{1/2}, \mbox{ respectively},$$
where $(r_k)_k$ denotes the Rademacher sequence (see \cite[p.\,217]{diestel} or \cite[Definition 6.2.10]{albiac}).

A Banach space $X$ is said to have nontrivial type whenever it has some type $1 < q \leq 2$.
In \cite[Lemma 3.4]{zeekoei}, Fourie and Zeekoei proved that in a Banach lattice $E$ with type $1< q \leq 2$, every disjoint sequence in the solid hull of a relatively weakly compact subset of $E$ is weakly $p$-summable for every $p \geq q^\ast$. In their proof they use the type $q$ of $E$ only to conclude that $E^*$ has cotype $q^*$, so this result holds for a larger class of Banach spaces, exactly with the same proof: %\textcolor{red}{This result, however, holds for a more general class of Banach lattices. Recall that a Banach space $X$ is said to have cotype $p$, $2 \leq p < \infty$, if there exists a constant $D > 0$ such that for every finite subset  $\{x_1, \dots, x_n \} \subset X$,
%$$ \left ( \sum_{k=1}^n \norma{x_k}^p \right )^{1/p} \leq D \left ( \int_0^1 \norma{\sum_{k=1}^n r_k(t) x_k}^2 \right )^{1/2}. $$
%Since, in the proof of \cite[Lemma 3.4]{zeekoei}, the authors assume the Banach lattice $E$ having type $1 < q \leq 2$, in order apply \cite[Proposition 11.10]{diestel} to obtain that $E^\ast$ has cotype $2 \leq q^\ast < \infty$, and then use \cite[Corollary 11.17]{diestel} which only assumes cotype. From this observation, by following the same proof given in \cite[Lemma 3.4]{zeekoei}, we have the following:}

\begin{lemma} \label{lemafourie}
    If $E^\ast$ has cotype $2 \leq p < \infty$, then every disjoint sequence contained in the solid hull of a relatively weakly compact subset of $E$ is weakly $p$-summable in $E$.
\end{lemma}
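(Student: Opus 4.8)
\medskip

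The plan is to prove a formally stronger statement, which yields the lemma immediately because relatively weakly compact sets are norm bounded and, for a disjoint sequence $(x_n)_n$ in $\sol{W}$, each $x_n$ satisfies $\norma{x_n}=\norma{\,|x_n|\,}\le\sup_{w\in W}\norma{w}$: \emph{if $E^*$ has cotype $p$, then every norm bounded disjoint sequence $(x_n)_n$ in $E$ is weakly $p$-summable.} So I fix such a sequence with $M:=\sup_n\norma{x_n}<\infty$, a functional $x^*\in B_{E^*}$ and an integer $N$, and aim to bound $\sum_{k=1}^{N}|x^*(x_k)|^p$ by a constant depending only on $M$ and the cotype-$p$ constant of $E^*$; since $N$ and $x^*$ are arbitrary this is exactly weak $p$-summability.

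The next step is to manufacture an auxiliary family of band projections on $E^*$ adapted to $x_1,\dots,x_N$. Recall that $E^*$ is Dedekind complete, so every band in $E^*$ is a projection band. For $k=1,\dots,N$ put $N_k:=\{y^*\in E^*:\langle|y^*|,|x_k|\rangle=0\}$; this is a band in $E^*$, and — the one nonroutine point, discussed below — the disjointness of $x_1,\dots,x_N$ forces the complementary bands $N_1^{d},\dots,N_N^{d}$ to be pairwise disjoint. Let $P_k$ be the band projection of $E^*$ onto $N_k^{d}$. If $y^*\in N_k$ then $|\langle y^*,x_k\rangle|\le\langle|y^*|,|x_k|\rangle=0$, and $(I-P_k)x^*\in N_k$, so $\langle P_kx^*,x_k\rangle=\langle x^*,x_k\rangle$; hence $\norma{P_kx^*}\ge|x^*(x_k)|/\norma{x_k}\ge|x^*(x_k)|/M$. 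Moreover, since the $P_k$ are pairwise disjoint band projections, $\sum_{k=1}^{N}P_k$ is again a band projection, so for any signs $\varepsilon_1,\dots,\varepsilon_N$,
$$ \Big|\sum_{k=1}^{N}\varepsilon_kP_kx^*\Big|=\sum_{k=1}^{N}|P_kx^*|=\sum_{k=1}^{N}P_k|x^*|\le|x^*|, \qquad\text{hence}\qquad \Big\|\sum_{k=1}^{N}\varepsilon_kP_kx^*\Big\|\le\norma{x^*}\le1. $$

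Now the cotype hypothesis is cashed in. Denoting by $C$ a cotype-$p$ constant of $E^*$ and applying the defining inequality to $P_1x^*,\dots,P_Nx^*$,
$$ \left(\sum_{k=1}^{N}\norma{P_kx^*}^p\right)^{1/p}\le C\left(\int_0^1\Big\|\sum_{k=1}^{N}r_k(t)P_kx^*\Big\|^2\,dt\right)^{1/2}\le C, $$
the last inequality because for each $t$ the sum $\sum_{k=1}^{N}r_k(t)P_kx^*$ is of the form $\sum_{k=1}^{N}\varepsilon_kP_kx^*$ and so has norm at most $1$. Combining this with $\norma{P_kx^*}\ge|x^*(x_k)|/M$ gives $\big(\sum_{k=1}^{N}|x^*(x_k)|^p\big)^{1/p}\le CM$, and letting $N\to\infty$ and taking the supremum over $x^*\in B_{E^*}$ shows that $(x_n)_n$ is weakly $p$-summable. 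As noted above, the stated lemma is the special case where $(x_n)_n$ lies in $\sol{W}$ for a relatively weakly compact (hence bounded) $W$.

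The step I expect to require the most care is the construction of the $P_k$, specifically the verification that the bands $N_1^{d},\dots,N_N^{d}$ of $E^*$ attached to the pairwise disjoint positive elements $|x_1|,\dots,|x_N|$ of $E$ are themselves pairwise disjoint, equivalently $N_i^{d}\subseteq N_j$ for $i\neq j$ (informally: a functional ``carried by'' $|x_i|$ annihilates every $|x_j|$ disjoint from $|x_i|$). This is a standard fact about order duals of Banach lattices, but it is the only genuinely lattice-theoretic input of the argument; everything else — the reduction to one finite $N$, the elementary identities for band projections, and a single use of the cotype inequality — is routine.
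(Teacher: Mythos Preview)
Your proof is correct, and it actually establishes strictly more than the lemma: you show that whenever $E^*$ has cotype $p$, \emph{every} norm bounded disjoint sequence in $E$ is weakly $p$-summable --- in the paper's language, $E$ has the disjoint property of order $p$. The paper does not argue the lemma directly; it defers to the proof of \cite[Lemma~3.4]{zeekoei} and observes that only the cotype of $E^*$ is used there. That argument, however, genuinely exploits that the disjoint sequence sits in the solid hull of a relatively weakly compact set. Your route is different and more elementary: the pairwise disjoint bands $N_k^d$ let you split any $x^*\in B_{E^*}$ into pieces $P_kx^*$ with $\|P_kx^*\|\ge |x^*(x_k)|/M$ whose signed sums stay below $|x^*|$ in modulus, so a single application of the cotype-$p$ inequality bounds $\sum_k|x^*(x_k)|^p$ uniformly.

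Two remarks. First, the disjointness of the bands $N_i^d$ that you flag is indeed routine once one writes $N_k=(I_{|x_k|})^\circ$ and uses the standard formula $(P_i\phi)(x)=\sup\{\phi(y):y\in I_{|x_i|},\ 0\le y\le x\}$ for the band projection in $E^*$: for $\phi\in N_i^d$ and $j\ne i$ one gets $\phi(|x_j|)=\sup\{\phi(y):0\le y\le n|x_i|,\ 0\le y\le |x_j|\}=0$ because $(n|x_i|)\wedge|x_j|=0$. Second --- and this is the real payoff of your approach --- the paper explicitly records as unknown (in the remark following Theorem~\ref{teo3}) whether a Banach lattice $E$ with $E^*$ of cotype $p$ can fail the disjoint property of order $p$. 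Your argument answers this negatively, and as a byproduct shows that the two alternative hypotheses in Theorem~\ref{teo3} are not independent: the cotype assumption on $E^*$ already implies the disjoint property of order $p$.
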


Since $c_0$ fails to have nontrivial type, one cannot apply \cite[Lemma 3.4]{zeekoei} to $c_0$. Nevertheless, since $c_0^{\ast} = \ell_1$ has cotype $2$,  Lemma \ref{lemafourie} applies for $c_0$ with $p = 2$.

Next we define the second condition we shall impose on $E$.

\begin{definition} \label{djp1}\rm
    A Banach lattice $E$ is said to have the {\it disjoint property of order $p$} if each bounded disjoint sequence in $E$ is weakly $p$-summable.
\end{definition}

%Let us see some remarks concerning this new property:

\begin{examples} \label{djp2}\rm (1) On the one hand, if $E$ has the disjoint property of order $p$, then $E$ has the disjoint property of order $q$ for any $q > p$. On the other hand, $\ell_3$ has the disjoint property of order $3$, but, using the canonical vectors $(e_n)_n$, we see that it fails the disjoint property of order $3/2$. %, because the canonical basis $(e_n)_n$ is a disjoint sequence in $\ell_3$ which is not weakly $\frac{3}{2}$-summable. %%{\normalfont To see that, take $\varphi = (a_j)_j \in \ell_{3/2} = \ell_3'$ which does not belong to $\ell_3$ and note that $\varphi(e_n) = a_n$ for every $n \in \N$.}

\noindent (2) By Lemma \ref{lemafourie}, every reflexive Banach lattice $E$ such that $E^\ast$ has cotype $2 \leq p < \infty$ has the disjoint property of order $p$. For instance, from \cite[Theorem 6.2.14]{albiac} we have the following for any measure $\mu$:

    {\rm (i)} For $1 < p < 2$, $(L_p(\mu))^\ast = L_{p^\ast}(\mu)$ has cotype $p^\ast >2$, so $L_p(\mu)$ has the disjoint property of order $p^{\ast}$.

    {\rm (ii)} For $2 \leq p < \infty$, $(L_p(\mu))^\ast = L_{p^\ast}(\mu)$ has cotype $2$, so $L_p(\mu)$ has the disjoint property of order $2$.

    %\vspace*{0.5em}
%\medskip
\noindent (3) If $E$ has the disjoint property of order $p$, then $E^\ast$ has order continuous norm.  Indeed, if $(x_n)_n$ is a bounded disjoint sequence  in $E$, then $(x_n)_n$ is weakly $p$-summable, hence it is weakly null. It follows that $E^\ast$ has order continuous norm by \cite[Theorem 2.4.14]{meyer}. In particular, infinite dimensional AL-spaces fail the disjoint property of order $p$ for any $p$.

%    \medskip
\noindent(4) If $E$ has an order unit, then $E$ has the disjoint property of order $p$ for every $p$. To see that, let $(x_n)_n \subseteq E$ be a bounded disjoint sequence. Since $E$ has an order unit, there exists $x \in E$ such that $|x_n| \leq x$ for every $n \in \N$. Given $x^\ast \in E^\ast$, we have
        $$ \sum_{k=1}^n |x^\ast(x_n)| \leq \sum_{k=1}^n |x^\ast|(|x_n|) = |x^\ast| \left ( \bigvee_{k=1}^n |x_n| \right ) \leq |x^\ast|(x) $$
         for every $k \in \N$, which gives $(x^\ast(x_n))_n \in \ell_1 \subseteq \ell_p$, proving that %By considering the inclusion $\ell_1 \to \ell_p$, we get that
         $(x_n)_n$ is weakly $p$-summable.

%    \medskip
\noindent (5)  The Banach lattice $c_0$ has the disjoint property of order $1$, hence of order $p$ for every $p$. Indeed, let $(a_n)_n \subseteq c_0$ be a bounded disjoint sequence, say $\norma{a_n}_\infty \leq M$ for every $n \in \N$. We write $a_n = (a_{n,j})_j \in c_0$ for each $n \in \N$. If $|a_{n_0,j_0}| \neq 0$, then $a_{n,j_0} = 0$ for every $n \neq n_0$, which means that, for every $j \in \N$, the set
    $ \{ n \in \N : a_{n,j} \neq 0 \} $
    is a singleton or empty. So, letting
    $ J = \{ j \in \N: \exists n_j \in \N \mbox{ such that } a_{n_j, j} \neq 0 \}, $ we have $a_{n, j} = 0$ for all $n \neq n_j$ and $j \in J$. Therefore, for all
    $f = (f_j)_j \in \ell_1$ and $k \in \mathbb{N}$, we get
        $$ \sum_{n=1}^k |f(a_n)| \leq \sum_{n=1}^k \sum_{j=1}^\infty |f_j| \cdot|a_{n,j}| = \sum_{j=1}^\infty \sum_{n=1}^k |f_j|\cdot |a_{n,j}| = \sum_{j\in J}^\infty |f_j|\cdot |a_{n_j,j}| \leq
        M \cdot \sum_{j=1}^\infty |f_j| < \infty, $$
which  proves that $c_0$ has the disjoint property of order $1$.%, and by the above item $c_0$ has the disjoint property of order $p$ for all $1 \leq p < \infty$.
\end{examples}

%Our next result states the same as Theorem \ref{teo2} under different hypothesis in the Banach lattice $E$.

\begin{theorem}\label{teo3} Let $1 \leq p < \infty$ and let $E$ be a Banach lattice such that either $E^*$ has cotype $p$ or $E$ has the disjoint property of order $p$. Then, a linear operator $T\colon E \to X$ is disjoint $p$-convergent if and only if $T$ is almost Dunford-Pettis.
\end{theorem}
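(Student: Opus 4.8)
The plan is to use the characterization of almost Dunford-Pettis operators from \cite[Theorem 2.2]{aqzelbour} together with Lemma \ref{lema1}. Since we already know (Lemma \ref{lema1}) that every disjoint $p$-convergent operator is order weakly compact, and since almost Dunford-Pettis operators are always disjoint $p$-convergent (weakly $p$-summable sequences are weakly null, as noted in the Introduction), the only direction requiring work is: assuming $T\colon E\to X$ is disjoint $p$-convergent, show $T$ is almost Dunford-Pettis. By \cite[Theorem 2.2]{aqzelbour} it suffices to show that every relatively weakly compact subset $W\subseteq E$ is $q_T$-almost order bounded, i.e. that for each $\varepsilon>0$ there is $u\in E^+$ with $q_T\big((|y|-u)^+\big)\leq\varepsilon$ for all $y\in W$.

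**First I would** argue by contradiction: suppose some relatively weakly compact $W\subseteq E$ is not $q_T$-almost order bounded. Then there is $\varepsilon>0$ such that for every $u\in E^+$ there exists $y\in W$ with $q_T\big((|y|-u)^+\big)>\varepsilon$. A standard gliding-hump / disjointification argument (as in the proof of \cite[Theorem 2.2]{aqzelbour}, or invoking \cite[Lemma 3.9]{aqzelbour}-type constructions) produces a disjoint sequence $(z_n)_n$ with each $|z_n|$ dominated by $|y_n|$ for some $y_n\in W$ (hence $(z_n)_n$ lies in the solid hull of $W$) and $\norma{T(z_n)}>\varepsilon/2$ for all $n$, or more precisely $q_T(z_n)>\varepsilon/2$. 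Now invoke the hypothesis on $E$: if $E^*$ has cotype $p$, then by Lemma \ref{lemafourie} the disjoint sequence $(z_n)_n$ — being contained in the solid hull of the relatively weakly compact set $W$ — is weakly $p$-summable in $E$; if instead $E$ has the disjoint property of order $p$, then $(z_n)_n$, being bounded and disjoint, is again weakly $p$-summable by Definition \ref{djp1}. Either way $(z_n)_n$ is disjoint and weakly $p$-summable.

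**Then** since $T$ is disjoint $p$-convergent, Theorem \ref{teo1}(d) gives $q_T(z_n)\longrightarrow 0$, contradicting $q_T(z_n)>\varepsilon/2$ for all $n$. This contradiction shows $W$ is $q_T$-almost order bounded, and combined with Lemma \ref{lema1} and \cite[Theorem 2.2]{aqzelbour}, $T$ is almost Dunford-Pettis. For the converse, every almost Dunford-Pettis operator sends disjoint weakly null sequences to norm null sequences; since disjoint weakly $p$-summable sequences are in particular disjoint weakly null, $T$ is disjoint $p$-convergent.

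**The main obstacle** I anticipate is carrying out the disjointification step cleanly — extracting from the failure of $q_T$-almost order boundedness a genuinely disjoint sequence sitting in the solid hull of $W$ on which $q_T$ (equivalently, up to a factor of $2$, $\norma{T(\cdot)}$ applied to suitable dominated elements) stays bounded away from zero. This is essentially the content of the ``only if'' direction of \cite[Theorem 2.2]{aqzelbour}, so the honest move is to isolate exactly that implication from their proof and cite it, rather than reprove it; the new ingredient here is purely Lemma \ref{lemafourie} / the disjoint property of order $p$, which upgrades such a disjoint sequence to a weakly $p$-summable one and lets Theorem \ref{teo1} deliver the contradiction.
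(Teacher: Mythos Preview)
Your proposal is correct and follows essentially the same route as the paper: reduce to \cite[Theorem 2.2]{aqzelbour} via Lemma~\ref{lema1}, then verify $q_T$-almost order boundedness of relatively weakly compact sets using the hypothesis on $E$ (Lemma~\ref{lemafourie} or Definition~\ref{djp1}) together with the $q_T$-characterization of disjoint $p$-convergence (Theorem~\ref{teo1}). The only packaging difference is that the paper argues directly rather than by contradiction: it shows that $q_T(y_n)\to 0$ for \emph{every} disjoint sequence $(y_n)_n$ in $\sol{W}$ and then invokes \cite[Theorem 4.36]{alip} to conclude $q_T$-almost order boundedness, which neatly sidesteps the gliding-hump extraction you flagged as the main obstacle.
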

\begin{proof}
    Let $T\colon E \to X$ be a disjoint $p$-convergent operator. It follows from Lemma \ref{lema1} that $T$ is order weakly compact. By \cite[Theorem 2.2]{aqzelbour} is is enough to prove that every relatively weakly compact subset of $E$ is $q_T$-almost order bounded. To accomplish this task, let $W \subseteq E$ be a relatively weakly compact subset of $E$, call $A = \sol{W}$ and let $(y_n)_n$ be a given disjoint sequence in $A$. We claim that $q_T(y_n) \longrightarrow 0$. By the definition of $q_T$ there exists a sequence $(z_n)_n$ such that $|z_n| \leq |y_n|$ and $q_T(y_n) \leq 2 \norma{T(z_n)}$ for every $n \in \N$.

     If $E^*$ has cotype $p$, % first that $E^*$ has cotype $p \geq 2$.  %As we already known, it suffices us to check that every relatively weakly compact subset of $E$ is $q_T$-almost order bounded.
   since $(z_n)_n$ is a disjoint sequence contained in the solid hull of a relatively weakly compact set, we obtain from Lemma \ref{lemafourie} that $(z_n)_n$ is weakly $p$-summable. And if $E$ has the disjoint property of order $p$, since $(z_n)_n$ is a disjoint bounded sequence, we obtain that $(z_n)_n$ is weakly $p$-summable by assumption. So, $(z_n)_n$ is weakly $p$-summable either way.

     As $T$ is a disjoint $p$-convergent operator, we have $ q_T(y_n) \leq 2 \norma{T(z_n)} \longrightarrow 0, $ which gives $q_T(y_n) \longrightarrow 0$.
     Now, by  \cite[Theorem 4.36]{alip} there exists $u \in E^+$ such that
     $$q_T((|x| - u)^+) = q_T({\rm id}_E(|x| - u)^+) < \varepsilon$$
     for every $x \in A$.  In particular, $q_T((|x| - u)^+) < \varepsilon$ for every $x \in W$, which proves that $W$ is $q_T$-almost order bounded. Thus $T$ is an almost Dunford-Pettis operator.
    %uppose now that $E$ has the disjoint property of order $p$.  %By \cite[Theorem 2.2]{aqzelbour}, it suffices us to check that every relatively weakly compact subset of $E$ is $q_T$-almost order bounded. So,
 %Given a relatively weakly compact subset $W$ of $E$, let $A = \sol{W}$. Given a disjoint sequence $(y_n)_n \subseteq A$, we claim that $q_T(y_n) \longrightarrow 0$. Indeed, by the definition of $q_T$, there exists a sequence $(z_n)_n$ such that $|z_n| \leq |y_n|$ and $q_T(y_n) \leq 2 \norma{T(z_n)}$ for all $n \in \N$. As $(z_n)_n$ is a  bounded disjoint sequence in $E$, the hypothesis yields that $(z_n)_n$ is weakly $p$-summable. The remain of the proof follows by the same argument used in the proof of Theorem \ref{teo2}, and therefore $T$ is an almost Dunford-Pettis operator.
\end{proof}

\begin{remark}\rm Let us discuss briefly the assumptions on $E$ in  the theorem above. On the one hand, we can apply the theorem for $E = c_0$ only for $p \geq 2$ ($c_0^* = \ell_1$ has  cotype 2); but using that $c_0$ has the disjoint property of order 1 (Example \ref{djp2}(5)), we can apply the theorem for every $p \geq 1$. On the other hand, we still do not know whether there exists a Banach lattice $E$ such that $E^*$ has cotype $2 \leq p < \infty$ and $E$ fails the disjoint property of order $p$.
\end{remark}
    %\textcolor{red}{We observed before Definition \ref{djp1} that  Theorem \ref{teo2} could be applied for $E = c_0$ and $ p = 2$. However, since $c_0$ has the disjoint property of order $p$ for every $1 \leq p < \infty$ (Example \ref{djp2}-5), Theorem \ref{teo3} gives us that, for every $1 \leq p < \infty$ and every Banach space $X$, a bounded linear operator $T \colon c_0 \to X$ is disjoint $p$-convergent if and only if $T$ is almost Dunford-Pettis. This example shows that we may apply Theorem \ref{teo3} in cases that Theorem \ref{teo2} does not hold.
%    %%%It follows from Remark \ref{djp2}-3 that $C([0,1])$ has the disjoint property of order $p$ but it fails to have nontrivial type (see \cite[Exampĺe 6.2.17]{albiac}). Therefore, $C([0,1])$ is an example of a Banach lattice where we can apply Theorem \ref{teo3} but we cannot apply Theorem \ref{teo2}.
%    We still do not know whether there exists a Banach lattice whose dual has cotype $2 \leq p < \infty$ that fails to have the disjoint property of order $p$. }

As a first application of Theorem \ref{teo3} we show that, under the assumptions of the theorem, the disjointness of positive sequences can be dropped in Theorem \ref{teo1}.

\begin{corollary} \label{cor1}
   Let $E$ be a Banach lattice satisfying one of the assumptions of Theorem {\rm \ref{teo3}}. The following are equivalent for a linear operator $T\colon E \to X$: \\
    {\rm (a)} $T$ is disjoint $p$-convergent. \\
    {\rm (b)} $\norma{T(x_n)} \longrightarrow 0$ for every positive weakly $p$-summable sequence $(x_n)_n$ in $E$. \\
    {\rm (c)} $q_T(x_n) \longrightarrow 0$ for every positive weakly $p$-summable sequence $(x_n)_n$ in $E$.
\end{corollary}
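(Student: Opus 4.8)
I would prove the cycle (a)$\Rightarrow$(b)$\Rightarrow$(c)$\Rightarrow$(b)$\Rightarrow$(a), so that all the work is in showing (a)$\Rightarrow$(b); the other implications are soft. The implication (c)$\Rightarrow$(b) is immediate from $\|T(x)\| \le q_T(x)$. For (b)$\Rightarrow$(c), given a positive weakly $p$-summable $(x_n)_n$, I would choose $(y_n)_n$ with $|y_n|\le x_n$ and $q_T(x_n)\le 2\|T(y_n)\|$; since $0\le y_n^{\pm}\le |y_n|\le x_n$, the sequences $(y_n^{\pm})_n$ are positive weakly $p$-summable (weak $p$-summability is inherited under positive domination, testing against $|x^\ast|$ for $x^\ast\in E^\ast$), so (b) gives $\|T(y_n^{\pm})\|\to 0$ and hence $q_T(x_n)\le 2(\|T(y_n^{+})\|+\|T(y_n^{-})\|)\to 0$. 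For (b)$\Rightarrow$(a), a positive disjoint weakly $p$-summable sequence is in particular positive weakly $p$-summable, so (b) implies condition (b) of Theorem \ref{teo1}, which is equivalent to (a).

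It then remains to prove (a)$\Rightarrow$(b). Assume $T$ is disjoint $p$-convergent. By Theorem \ref{teo3}, $T$ is almost Dunford--Pettis, and by Lemma \ref{lema1} it is order weakly compact. Let $(x_n)_n$ be a positive weakly $p$-summable sequence in $E$; being weakly null, the set $W:=\{x_n:n\in\N\}$ is relatively weakly compact. Now I would rerun the argument in the proof of Theorem \ref{teo3} with this particular $W$ and $A:=\sol{W}$: for any disjoint sequence $(v_n)_n$ in $A$, pick $(z_n)_n$ with $|z_n|\le |v_n|$ and $q_T(v_n)\le 2\|T(z_n)\|$; then $(z_n)_n$ is disjoint and contained in $A$, hence weakly $p$-summable (by Lemma \ref{lemafourie} if $E^\ast$ has cotype $p$, or by the disjoint property of order $p$ otherwise), so $q_T(v_n)\le 2\|T(z_n)\|\to 0$. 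Thus every disjoint sequence in $A$ is $q_T$-null, and exactly as in the proof of Theorem \ref{teo3} (via \cite[Theorem 4.36]{alip}) there is, for each $\varepsilon>0$, some $u\in E^{+}$ with $q_T((x_n-u)^{+})<\varepsilon$ for all $n$ (using $x_n\ge 0$). Since $q_T$ is a lattice seminorm and $x_n=x_n\wedge u+(x_n-u)^{+}$, we get $q_T(x_n)\le q_T(x_n\wedge u)+\varepsilon$, so it suffices to prove $q_T(x_n\wedge u)\to 0$.

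\textbf{The main obstacle} is precisely this last step. The sequence $(x_n\wedge u)_n$ is order bounded by $u$, is positive weakly $p$-summable (being dominated by $(x_n)_n$), and lies in $A$, but it need not be disjoint, so neither the hypothesis on $E$ nor disjoint $p$-convergence applies to it directly. I expect to close the gap by passing to the principal ideal $E_u$, which with its order-unit norm is an AM-space, hence lattice isometric to some $C(K)$, and on which $q_T$ restricted coincides with $q_{T|_{E_u}}$: order weak compactness of $T$ forces $T|_{E_u}\colon C(K)\to X$ to be weakly compact, hence completely continuous by the Dunford--Pettis property of $C(K)$. The remaining point — transferring the relevant nullity of $(x_n\wedge u)_n$ from $E$ into $E_u$ (equivalently, using the order-boundedness together with the fact that every disjoint sequence in $A$ is $q_T$-null to disjointify $(x_n\wedge u)_n$ inside $\sol{W}$) — is where the hypothesis on $E$ is used once more, and is the delicate part of the argument. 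Granting $q_T(x_n\wedge u)\to 0$, we conclude $q_T(x_n)\to 0$ since $\varepsilon>0$ was arbitrary, and in particular $\|T(x_n)\|\le q_T(x_n)\to 0$, which is (b).
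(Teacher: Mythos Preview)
The easy implications (b)$\Leftrightarrow$(c) and (b)$\Rightarrow$(a) in your proposal are fine and essentially match the paper's treatment.

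The problem is your (a)$\Rightarrow$(b). You correctly invoke Theorem~\ref{teo3} to conclude that $T$ is almost Dunford--Pettis, but then you embark on a long detour --- rerunning the $q_T$-almost-order-bounded argument, splitting $x_n=(x_n-u)^{+}+x_n\wedge u$, and attempting to pass to the principal ideal $E_u$ --- and you yourself flag the last step (controlling $q_T(x_n\wedge u)$) as ``the delicate part'' without actually carrying it out. The obstacle you identify is genuine: weak nullity of $(x_n\wedge u)_n$ in $E$ does not automatically transfer to weak nullity in the AM-space $(E_u,\|\cdot\|_\infty)$, since not every functional on $E_u$ extends to $E$; so the complete continuity of $T|_{E_u}$ cannot be applied as stated. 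Your alternative suggestion of ``disjointifying $(x_n\wedge u)_n$ inside $\sol{W}$'' is equally vague.

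The paper's route is one line and bypasses all of this: once $T$ is almost Dunford--Pettis, it sends every \emph{positive weakly null} sequence (disjoint or not) to a norm null sequence --- this is exactly the implication 1)$\Rightarrow$3) of \cite[Theorem~2.2]{aqzelbour}, the same reference already used in the proof of Theorem~\ref{teo3}. Since a positive weakly $p$-summable sequence is in particular positive weakly null, $\|T(x_n)\|\to 0$ follows immediately. No splitting, no principal ideal, no second use of the hypothesis on $E$. In effect you were trying to re-prove from scratch the hard direction of \cite[Theorem~2.2]{aqzelbour}; the fix is simply to quote it.
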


\begin{proof}
    (a)$\Rightarrow$(b) Since $T$ is disjoint $p$-convergent, $T$ is almost Dunford-Pettis by Theorem \ref{teo3}. If $(x_n)_n$ is a positive weakly $p$-summable sequence, then $(x_n)_n$ is weakly null. Therefore $\norma{T(x_n)} \longrightarrow 0$ by the implication 1)$\Rightarrow$3) of \cite[Theorem 2.2]{aqzelbour}.

The implication (b)$\Rightarrow$(c) follows from the same argument used in the proof of Theorem \ref{teo1}, and (c)$\Rightarrow$(a) follows directly from the implication (c)$\Rightarrow$(a) of Theorem \ref{teo1}.
\end{proof}

\section{When the adjoint operator is disjoint $p$-convergent}

Adjoint operators enjoy properties that are not shared by arbitrary operators between dual spaces. For instance, the adjoint of an operator is always weak$^*$-weak$^*$ continuous, a property that does not hold for arbitrary operators between dual spaces. Following this trend, in this section we prove several properties of disjoint $p$-convergent adjoint operators. We focus on investigating the situation where the adjoint of every operator is disjoint $p$-convergent.

Before proceeding to the main results of the section, we give a second application of Theorem \ref{teo3} concerning the scope of this section. For the class of almost limited operators see, e.g., \cite{elbour}.

\begin{corollary}\label{corolario} Let $F$ be a Banach lattice such that $F^*$ has the disjoint property of order $p$. The following are equivalent for a linear operator $T \colon X \longrightarrow F$:\\
%    Let $E$ and $F$ be two Banach lattices such that $F^{\ast}$ has the disjoint property of order $p$ and let $T\colon E \rightarrow F$ be a positive operator. :\\
    {\rm (a)} $T^{\ast} \colon F^{\ast}\rightarrow X^{\ast}$  is disjoint $p$-convergent.\\
    {\rm (b)} $T^{\ast}\colon F^{\ast} \rightarrow X^{\ast}$ is almost Dunford-Pettis.\\
    {\rm (c)} $T$ is almost limited, that is $T^\ast$ maps disjoint weak* null sequences in $F^*$ to norm null sequences in $X^*$.
\end{corollary}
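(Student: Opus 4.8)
The plan is to obtain the three equivalences by combining Theorem \ref{teo3} with the very definition of the disjoint property of order $p$, the key observation being that the hypothesis on $F$ says precisely that the Banach lattice $F^*$ satisfies one of the two assumptions of Theorem \ref{teo3}.

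For (a)$\Leftrightarrow$(b) I would simply apply Theorem \ref{teo3} to the Banach lattice $F^*$, which has the disjoint property of order $p$ by hypothesis, and to the operator $T^* \colon F^* \to X^*$: it follows at once that $T^*$ is disjoint $p$-convergent if and only if $T^*$ is almost Dunford-Pettis.

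The implication (c)$\Rightarrow$(b) is the routine one: every disjoint weakly null sequence $(x_n^*)_n$ in $F^*$ is, in particular, weak$^*$ null (the canonical image of $F$ in $F^{**}$ separates the points of $F^*$), so if $T^*$ sends disjoint weak$^*$ null sequences to norm null sequences, it sends disjoint weakly null sequences to norm null sequences as well, that is, $T^*$ is almost Dunford-Pettis. For (b)$\Rightarrow$(c), which is where the assumption on $F^*$ is actually used, I would take a disjoint weak$^*$ null sequence $(x_n^*)_n$ in $F^*$; by the Banach--Steinhaus theorem it is norm bounded, hence a bounded disjoint sequence in $F^*$, so by Definition \ref{djp1} it is weakly $p$-summable in $F^*$, in particular weakly null. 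Since $T^*$ is almost Dunford-Pettis, $\norma{T^*(x_n^*)} \longrightarrow 0$, so $T$ is almost limited. Chaining (a)$\Leftrightarrow$(b), (b)$\Rightarrow$(c) and (c)$\Rightarrow$(b) closes the cycle.

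I do not expect a genuine obstacle here; the only point that requires a moment's thought is recognizing that the disjoint property of order $p$ on $F^*$ forces every disjoint weak$^*$ null sequence in $F^*$ to be weakly null, which is exactly what erases the discrepancy between the weak$^*$ and weak formulations appearing in the definitions of ``almost limited operator'' and ``almost Dunford-Pettis operator''. Everything else is bookkeeping on top of Theorem \ref{teo3}.
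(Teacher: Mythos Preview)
Your proposal is correct and follows essentially the same approach as the paper: (a)$\Leftrightarrow$(b) via Theorem \ref{teo3}, (c)$\Rightarrow$(b) immediate, and the remaining implication via the observation that a bounded disjoint (in particular, disjoint weak$^*$ null) sequence in $F^*$ is weakly $p$-summable. The only cosmetic difference is that the paper proves (a)$\Rightarrow$(c) directly (using that $T^*$ is disjoint $p$-convergent on the weakly $p$-summable sequence), whereas you pass through (b)$\Rightarrow$(c) by first noting that weakly $p$-summable implies weakly null; since (a)$\Leftrightarrow$(b) is already in hand, this is a distinction without a difference.
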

\begin{proof}
    (a)$\Leftrightarrow$(b) follows from Theorem \ref{teo3} and (c)$\Rightarrow$(b) is immediate. Let us prove (a)$\Rightarrow$(c). Given a disjoint weak$^*$ null sequence $(y_n^\ast)_n$ in $F^*$, $(y_n^\ast)_n$ is a bounded disjoint sequence in a Banach lattice with the disjoint property of order $p$, so it is weakly $p$-summable. It follows that $\norma{T^\ast(y_n^\ast)} \longrightarrow 0$ because $T^\ast$ is disjoint $p$-convergent. % and if $(y_n^\ast)_n$ is a disjoint weak* null sequence in $F^\ast$, by the assumption, we obtain that $(y_n^\ast)_n$ is weakly $p$-summable, and so $\norma{T^\ast(y_n^\ast)} \longrightarrow 0$.
\end{proof}

In the next two results we prove characterizations of disjoint $p$-convergent adjoint operators that shall be useful a couple of times. %From now on we shall work with operators between Banach lattices.
%%\textcolor{red}{Estou enunciando o lema abaixo para operadores a valores em espa\c cos de Banach. Por favor confiram se está tudo OK.}

%In order to state our results, we begin by characterizing when an adjoint operator is disjoint $p$-convergent.

\begin{lemma}\label{equivalentes}
The following are equivalent for a linear operator $T \colon X \rightarrow F$:\\
{\rm (a)} $T^{\ast}\colon F^{\ast}\rightarrow X^{\ast}$ is disjoint $p$-convergent.\\
{\rm (b)} $q_{T^{\ast}}(y_{n}^{\ast})\longrightarrow 0$ for every disjoint weakly $p$-summable sequence $(y_{n}^{\ast})_n$ in $F^{\ast}$.\\
{\rm (c)} $q_{T^{\ast}}(y_{n}^{\ast})\longrightarrow 0$ for every positive disjoint weakly $p$-summable sequence $(y_{n}^{\ast})_n$ in $F^{\ast}$.\\
{\rm (d)} $y_{n}^{\ast}(T(x_{n}))\longrightarrow 0$ for every disjoint weakly $p$-summable sequence $(y_{n}^{\ast})_n$ in $F^{\ast}$  and every bounded sequence  $(x_{n})_n$ in $X$.\\
{\rm (e)} $y_{n}^{\ast}(T(x_{n}))\longrightarrow 0$ for every positive  disjoint weakly $p$-summable sequence $(y_{n}^{\ast})_n$ in $F^{\ast}$  and every bounded sequence  $(x_{n})_n$ in $X$.\\
{\rm (f)} Regardless of the Banach lattice $G$ and the linear operator $S\colon G\rightarrow X$, $S^{\ast}T^{\ast}\colon F^{\ast}\rightarrow G^{\ast}$ is a disjoint $p$-convergent operator.\\ % for every linear operator  and every Banach lattice $G$.\\
{\rm (g)} Regardless of the linear operator  $S \colon \ell_{1}\rightarrow X$,  $S^{\ast}T^{\ast}\colon F^{\ast}\rightarrow \ell_{1}^{\ast}$ is a disjoint $p$-convergent operator. % for every linear operator.
\end{lemma}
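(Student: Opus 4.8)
The plan is to prove the chain of equivalences in a cyclic (or near-cyclic) fashion, exploiting the fact that several of these conditions are literally restatements of Theorem \ref{teo1} applied to the operator $T^{\ast}\colon F^{\ast}\to X^{\ast}$. First I would observe that the equivalence (a)$\Leftrightarrow$(b)$\Leftrightarrow$(c) is nothing but Theorem \ref{teo1} specialized to the operator $T^{\ast}$ in place of $T$ and $F^{\ast}$ in place of $E$: (a) is disjoint $p$-convergence of $T^{\ast}$, (b) is condition (d) of Theorem \ref{teo1} for $T^{\ast}$, and (c) is condition (c) of Theorem \ref{teo1} for $T^{\ast}$. So those come for free.

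Next I would handle the ``duality'' conditions (d) and (e). For (c)$\Rightarrow$(e): if $(y_n^{\ast})_n$ is positive disjoint weakly $p$-summable in $F^{\ast}$ and $(x_n)_n\subseteq X$ is bounded, say $\norma{x_n}\leq M$, then $|y_n^{\ast}(T(x_n))| = |\langle T^{\ast}(y_n^{\ast}), x_n\rangle| \leq M\norma{T^{\ast}(y_n^{\ast})} \leq M\, q_{T^{\ast}}(y_n^{\ast}) \to 0$ by (c). The implication (d)$\Rightarrow$(e) is trivial, and (e)$\Rightarrow$(d) follows by splitting a general disjoint weakly $p$-summable sequence $(y_n^{\ast})_n$ into positive and negative parts $(y_n^{\ast +})_n$, $(y_n^{\ast -})_n$, which are again disjoint weakly $p$-summable (using \cite[Proposition 2.2]{fourie}, exactly as in the proof of Theorem \ref{teo1}). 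For (e)$\Rightarrow$(c), the natural move is to unfold $q_{T^{\ast}}$: given a positive disjoint weakly $p$-summable $(y_n^{\ast})_n$, pick $(z_n^{\ast})_n$ with $|z_n^{\ast}|\leq y_n^{\ast}$ and $q_{T^{\ast}}(y_n^{\ast})\leq 2\norma{T^{\ast}(z_n^{\ast})}$; writing $z_n^{\ast}= z_n^{\ast +}-z_n^{\ast -}$ with both summands positive, disjoint and weakly $p$-summable, and choosing $x_n\in B_X$ with $\norma{T^{\ast}(z_n^{\ast +})}\leq \langle T^{\ast}(z_n^{\ast +}),x_n\rangle + 1/n$ (and similarly for $z_n^{\ast -}$), condition (e) applied to each of these two positive sequences forces $\norma{T^{\ast}(z_n^{\ast})}\to 0$, hence $q_{T^{\ast}}(y_n^{\ast})\to 0$. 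I expect this is the most delicate bookkeeping step, since one must be careful that the ``$+1/n$'' perturbations and the norming functionals are chosen so the bounded-sequence hypothesis of (e) is legitimately invoked.

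Finally, for the composition conditions (f) and (g): the implication (f)$\Rightarrow$(g) is immediate (take $G=\ell_1$), and (a)$\Rightarrow$(f) is easy, since if $T^{\ast}$ is disjoint $p$-convergent and $(y_n^{\ast})_n$ is disjoint weakly $p$-summable in $F^{\ast}$, then $\norma{T^{\ast}(y_n^{\ast})}\to 0$, hence $\norma{S^{\ast}T^{\ast}(y_n^{\ast})}\leq \norma{S^{\ast}}\cdot\norma{T^{\ast}(y_n^{\ast})}\to 0$, so $S^{\ast}T^{\ast}$ is disjoint $p$-convergent. The only implication with real content is (g)$\Rightarrow$(a) (or (g)$\Rightarrow$(e)): this is where the choice $G=\ell_1$ is essential. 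The hard part will be to manufacture, from a given positive disjoint weakly $p$-summable $(y_n^{\ast})_n$ in $F^{\ast}$ that witnesses a failure of disjoint $p$-convergence of $T^{\ast}$, a single operator $S\colon \ell_1\to X$ for which $S^{\ast}T^{\ast}$ also fails. The standard device is: passing to a subsequence we may assume $\norma{T^{\ast}(y_n^{\ast})}\geq\delta>0$ for all $n$; choose $x_n\in B_X$ with $|\langle T^{\ast}(y_n^{\ast}),x_n\rangle|\geq\delta/2$, and let $S\colon\ell_1\to X$ be the operator $(a_n)_n\mapsto \sum_n a_n x_n$, which is bounded because $(x_n)_n\subseteq B_X$. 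Then $S^{\ast}T^{\ast}(y_n^{\ast})=(\langle T^{\ast}(y_n^{\ast}),x_k\rangle)_k\in\ell_1^{\ast}=\ell_\infty$ has sup-norm at least $|\langle T^{\ast}(y_n^{\ast}),x_n\rangle|\geq\delta/2$, contradicting that $S^{\ast}T^{\ast}$ is disjoint $p$-convergent (the sequence $(y_n^{\ast})_n$ being disjoint weakly $p$-summable). This closes the loop (g)$\Rightarrow$(a), and together with the implications above all seven conditions are equivalent.
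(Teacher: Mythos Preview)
Your proposal is correct and follows essentially the same approach as the paper's proof: the same $q_{T^{\ast}}$ unfolding, the same positive/negative part splitting via \cite[Proposition 2.2]{fourie}, and the same $\ell_1$-operator $S(a)=\sum_n a_n x_n$ to handle (g). The only organisational differences are that you invoke Theorem \ref{teo1} as a shortcut for (a)$\Leftrightarrow$(b)$\Leftrightarrow$(c) (the paper reproves (a)$\Rightarrow$(b) directly, with the same idea), and that you close the loop via (e)$\Rightarrow$(c) and (g)$\Rightarrow$(a) by contradiction, whereas the paper does (e)$\Rightarrow$(a) by contradiction and (g)$\Rightarrow$(d) directly---same ingredients, slightly different routing.
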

\begin{proof} First note that the implications (b)$\Rightarrow$(c), (d)$\Rightarrow$(e) and (f)$\Rightarrow$(g) are  immediate.

(a)$\Rightarrow$(b) Let $(y_n^\ast)_n$ be a disjoint weakly $p$-summable sequence in $F^{\ast}$. By the definition of $q_{T^{\ast}}$ there exists a sequence $(z_{n}^{\ast})_n$ in $F^{\ast}$ such that $|z_{n}^{\ast}|\leq |y_{n}^{\ast}|$ and $0\leq q_{T^{\ast}}(y_{n}^{\ast})\leq \|T^{\ast}(z_{n}^{\ast})\|+\frac{1}{n}$ for every $n\in\mathbb{N}$. Since $(y_n^\ast)_n$ is disjoint and weakly $p$-summable, $(|y_n^\ast|)_n$ is weakly $p$-summable by \cite[Proposition 2.2]{fourie}, thus $(z_n^\ast)_n$ is disjoint and weakly $p$-summable as well. Hence
$\norma{T^\ast(z_n^\ast)} \longrightarrow 0$, from which we get  $q_{T^{\ast}}(y_{n}^{\ast})\longrightarrow 0$.

%(b) $\Rightarrow$ (c): Immediate.

(c)$\Rightarrow$(d) Let $(y_{n}^{\ast})_n$ be a disjoint weakly $p$-summable sequence in $F^\ast$ and $(x_{n})_n$ be a bounded sequence in $X$. By \cite[Corollary 2.3]{fourie}, the sequences
$((y_{n}^{\ast})^{+})_n$ and $((y_{n}^{\ast})^{-})_n$  are  positive disjoint  weakly $p$-summable and, by assumption, we have  $q_{T^{\ast}}((y_{n}^{\ast})^{+})\longrightarrow 0$ and $q_{T^{\ast}}((y_{n}^{\ast})^{-})\longrightarrow 0$. Letting $M = \sup\limits_{n \in \N} \norma{x_n}$ we get
     \begin{align*}
         |y_{n}^{\ast}(T(x_{n}))|&=|T^{\ast}(y_{n}^{\ast})(x_{n})|\leq \|T^{\ast}(y_{n}^{\ast})\|\cdot\|x_{n}\|\leq M \|T^{\ast}(y_{n}^{\ast})\|\\
         &\leq M[\|T^{\ast}((y_{n}^{\ast})^{+})\|+\|T^{\ast}((y_{n}^{\ast})^{-})\|]\leq M [q_{T^{\ast}}((y_{n}^{\ast})^{+})+q_{T^{\ast}}((y_{n}^{\ast})^{-})]\longrightarrow 0,
     \end{align*}
     which implies that $y_{n}^{\ast}(T(x_{n}))\longrightarrow 0.$

%(d) $\Rightarrow$ (e): Immediate.

(e)$\Rightarrow$(a) Suppose that $T^\ast$ is not disjoint $p$-convergent. In this case there exists, by \cite[Proposition 4.6]{fourie}, a positive disjoint weakly $p$-summable sequence $(y_n^\ast)_n$ in $F^\ast$ such that $(T^\ast(y_n^\ast))_n$ is not norm null. By passing to a subsequence if necessary, we may assume that there exists $\alpha > 0$ such that $\norma{T^\ast(y_n^\ast)} \geq \alpha$ for every $n \in \N$. For each $n \in \N$, letting $x_n^\ast = \dfrac{T^\ast(y_n^\ast)}{\norma{T^\ast(y_n^\ast)}}$, there exists $x_n \in B_X$ such that $|x_n^\ast(x_n)| \geq 1/2$. So, for every $n \in \N$,
$$ \dfrac{1}{2} \leq |x_n^\ast (x_n)| = \dfrac{|T^\ast(y_n^\ast)(x_n)|}{\norma{T^\ast(y_n^\ast)}} \leq \dfrac{1}{\alpha} |y_n^\ast(T(x_n))|, $$
which contradicts the assumption $y_n^\ast(T(x_n)) \longrightarrow 0$.

(a)$\Rightarrow$(f) Let $S \colon G \to F$  be a linear operator. If $(y_n^\ast)_n$ is a disjoint weakly $p$-summable sequence in $F^*$, then $\norma{T^\ast y_n^\ast} \longrightarrow 0$ because $T^\ast$ is disjoint $p$-convergent, therefore $ \norma{S^\ast T^\ast (y_n^\ast)} \leq \norma{S^\ast} \cdot \norma{T^\ast (y_n^\ast)} \longrightarrow 0, $
proving that $S^\ast T^\ast$ is disjoint $p$-convergent.

%(f) $\Rightarrow$ (g): Immediate.

(g)$\Rightarrow$(d) Let $(y_{n}^{\ast})_n$ be a disjoint weakly $p$-summable sequence in $F^{\ast}$  and let $(x_{n})_n$ be a bounded sequence in $X$. Defining $S \colon \ell_{1}\rightarrow X$ by $S (a)=\sum\limits_{n=1}^{\infty} a_{n}x_{n}$ for every $a=(a_{n})_n \in \ell_{1}$, we obtain that $S$ is a (bounded) linear operator. Thus $S^\ast T^\ast$ is disjoint $p$-convergent by assumption. It follows that
\begin{align*}
    |y_n^\ast (T(x_n))| & = |y_n^\ast (T(S(e_n)))|  \leq \sup \conj{|y_n^\ast (TS(a))|}{a \in B_{\ell_1}} \\
        & = \sup \conj{|(S^\ast T^\ast)(y_n^\ast) (a)|}{a \in B_{\ell_1}}  = \norma{S^\ast T^\ast (y_n^\ast)} \longrightarrow 0.
\end{align*}
\end{proof}

The following lemma, which was inspired by ideas that go back to Dodds and Fremlin \cite{dofre} and de Pagter and Schep \cite{pagter}, is needed to prove our next result.

\begin{lemma} \label{lemaantigo}
   Let $E$ be a Banach lattice and let $(x_n^\ast)_n $ be a positive seminormalized weak* null sequence in $E^\ast$, say $\norma{x_n^\ast} \geq M > 0$ for every $n \in \N$. Then, there exist an increasing sequence $(n_m)_m \subset \N$ and a positive disjoint bounded sequence $(x_n)_n \subset E$ such that $x_{n_m}^\ast (x_m) \geq M(4^{-1} - 2^{-m})$ for every $m \in \N$.
\end{lemma}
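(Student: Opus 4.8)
The plan is to build the increasing sequence $(n_m)_m$ and the disjoint sequence $(x_n)_n$ recursively, using a standard "disjointification by small perturbation'' argument in the spirit of Dodds--Fremlin and de Pagter--Schep. Since $(x_n^\ast)_n$ is positive, seminormalized with $\norma{x_n^\ast} \ge M$, for each $n$ there exists a unit vector $u_n \in E^+$ (replace $u_n$ by $|u_n|$) with $x_n^\ast(u_n)$ close to $M$, say $x_n^\ast(u_n) > M/2$; these $u_n$ are the "raw'' candidates, and I will carve a disjoint sequence out of them by peeling off a lattice combination of finitely many previously chosen vectors. The key quantitative input is the inequality, valid for $v, w \in E^+$,
$$ v \wedge w \le v \quad\text{and}\quad x^\ast\big((v-w)^+\big) \ge x^\ast(v) - x^\ast(w), $$
together with the elementary lattice fact that $\big(v - \sum_{k} w_k\big)^+$ is disjoint from each $w_k$ when the $w_k$ themselves come with disjointness built in — more precisely I will use $(v - w)^+ \wedge w = 0$ and iterate.

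Concretely, at stage $m$ I will already have chosen indices $n_1 < \dots < n_{m-1}$ and disjoint vectors $x_1, \dots, x_{m-1} \in E^+$ with the required estimates. Because $(x_j^\ast)_j$ is weak* null and each previously used vector lies in $E$, the scalars $x_n^\ast(x_j)$ and more generally $x_n^\ast$ evaluated on any fixed finite lattice expression in $x_1,\dots,x_{m-1}$ tend to $0$ as $n \to \infty$; this lets me pick $n_m > n_{m-1}$ so large that the "interference'' from the earlier vectors, measured against $x_{n_m}^\ast$, is at most $M\cdot 2^{-m}$ (this is where the geometric tail $2^{-m}$ enters). I then set $x_m := \big(u_{n_m} - \text{(finite sup/combination of the $x_j$, $j<m$)}\big)^+$, or rather I disjointify $u_{n_m}$ against $x_1 \vee \dots \vee x_{m-1}$ by a Dodds--Fremlin-type truncation so that $x_m$ is disjoint from all earlier $x_j$; the truncation is arranged to remove at most mass $M\cdot 2^{-m}$ as seen by $x_{n_m}^\ast$, so that
$$ x_{n_m}^\ast(x_m) \ge x_{n_m}^\ast(u_{n_m}) - M\cdot 2^{-m} \ge \tfrac{M}{4} - M\cdot 2^{-m} = M(4^{-1} - 2^{-m}). $$
Boundedness of $(x_n)_n$ is automatic since $0 \le x_m \le u_{n_m}$ and $\norma{u_{n_m}} = 1$.

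The main obstacle is the disjointification bookkeeping: ensuring simultaneously that (i) $x_m$ is genuinely disjoint from \emph{all} of $x_1, \dots, x_{m-1}$ (not just the most recent one), and (ii) the evaluation $x_{n_m}^\ast(x_m)$ loses only a controlled amount. Handling (i) requires either subtracting the full join $\bigvee_{j<m} x_j$ in one go, or an inductive small-perturbation scheme where each $x_j$ is itself only "almost'' disjoint and a final diagonal clean-up is performed; I expect the cleanest route is the former, using the identity $(u - \bigvee_{j<m} x_j)^+ \wedge x_i = 0$ for each $i<m$, combined with the estimate $x_{n_m}^\ast\big((u_{n_m} - \bigvee_{j<m} x_j)^+\big) \ge x_{n_m}^\ast(u_{n_m}) - \sum_{j<m} x_{n_m}^\ast(x_j)$ and the weak* convergence $x_{n}^\ast(x_j) \to 0$ used finitely many times to choose $n_m$. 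Everything else — positivity, the normalization constant $M/4$, the geometric error — is routine once this recursion is set up correctly.
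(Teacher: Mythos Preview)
There is a genuine gap in the disjointification step. The identity you rely on, $(v-w)^+ \wedge w = 0$, is false: already in $\mathbb{R}$ with $v=2$, $w=1$ one has $(v-w)^+ \wedge w = 1 \wedge 1 = 1$. Hence defining $x_m := \big(u_{n_m} - \bigvee_{j<m} x_j\big)^+$ does \emph{not} force $x_m \perp x_i$ for $i<m$; for instance in $\mathbb{R}^2$, if $x_1=(1,0)$ and $u_{n_2}=(2,0)$ then your $x_2=(1,0)=x_1$. The estimate $x^\ast\big((v-w)^+\big)\ge x^\ast(v)-x^\ast(w)$ and the recursive choice of $n_m$ via weak* convergence are both correct, but without disjointness the lemma is not established, and your fallback ``almost disjoint plus diagonal clean-up'' is not spelled out.

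The paper fixes exactly this point by subtracting much more. After normalizing $z_n^\ast = x_n^\ast/\|x_n^\ast\|$, picking norm-one $z_{n_m}\in E^+$ with $z_{n_m}^\ast(z_{n_m})\ge 1/2$, and choosing $n_m$ so that $z_{n_m}^\ast\big(\sum_{k<m} z_{n_k}\big) < 2^{-(2m+2)}$, one sets
\[
x_m=\Big(z_{n_m}-4^{m}\sum_{k=1}^{m-1} z_{n_k}-2^{-m}\sum_{k=1}^{\infty}2^{-k}z_{n_k}\Big)^+.
\]
The exponentially growing coefficient $4^{m}$ on the past terms together with the tail series (which controls future terms) is precisely what yields pairwise disjointness; this is a known device of de~Pagter--Schep and Retbi, cited as such. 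The extra $4^m$-term is what costs the additional $1/4$ in the estimate $z_{n_m}^\ast(x_m)\ge \tfrac12-\tfrac14-2^{-m}$, explaining the constant $M(4^{-1}-2^{-m})$. Your outline has the right skeleton (norming vectors, recursive index selection, truncation to a positive part), but it cannot be completed without this stronger disjointification.
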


\begin{proof}
%Por hipótese $\|x_{n}^{\ast}\|\geq M>0$, para todo $n\in\mathbb{N}$. Ponha
Setting $z_{n}^{\ast}=\frac{x_{n}^{\ast}}{\|x_{n}^{\ast}\|}$ for every $n$, from  %$z_{n}^{\ast}\geq 0$ e  por \cite[p. 182]{positiveoperators}
$$1=\|z_{n}^{\ast}\|=\sup\{z_{n}^{\ast}(z): z\in E^{+} \text{ and } \|z\|=1\}$$
\cite[p.\,182]{alip}, there exists a positive sequence  $(z_{n})_n$ in $E$ such that $\|z_{n}\|=1$ and $z_{n}^{\ast}(z_{n})\geq \frac{1}{2}$  for every $n\in\mathbb{N}$. %Como $x_{n}^{\ast}\xrightarrow{\,\,\omega^{\ast}}0$ então
It is clear that $z_{n}^{\ast}\xrightarrow{\,\,\omega^{\ast}}0$. Putting $n_{1}=1$ we have $z_{n}^{\ast}(z_{n_{1}})\longrightarrow 0$, so there is $n_{2}>n_{1}$ such that $z_{n_{2}}^{\ast}(z_{n_{1}})<\frac{1}{2^{2\times 2+2}}$. The convergence $z_{n}^{\ast}(z_{n_{1}}+z_{n_{2}})\longrightarrow 0$ gives $n_{3}>n_{2}$ such that $z_{n_{3}}^{\ast}(z_{n_{1}}+z_{n_{2}})<\frac{1}{2^{2\times 3+2}}$. Inductively we construct a subsequence $(n_{m})_m$ of $\mathbb{N}$ such that $z_{n_{m}}^{\ast}\left(\sum\limits_{k=1}^{m-1}z_{n_{k}}\right)\leq \frac{1}{2^{2m+2}}$ for every $m\geq 2$. Now define
$$x_{m}=\left(z_{n_{m}}-4^{m}\cdot\displaystyle\sum_{k=1}^{m-1}z_{n_{k}}-2^{-m}\cdot\sum_{k=1}^{\infty}2^{-k}z_{n_{k}}\right)^{+} \text{ for every } m\geq 2.$$
Taking $x_{1}=0$, the sequence $(x_{m})_m$ is positive disjoint in $E$ by \cite[Lemma 2.6]{retbi} or, alternatively, by the proof of \cite[Lemma 2.4]{pagter}. Note that  $0\leq x_{m}\leq z_{n_{m}}$, hence $\|x_{m}\|\leq \|z_{n_{m}}\|=1$ for every $m\in\mathbb{N}$. For $m\geq 2$,
 \begin{align*}
z_{n_{m}}^{\ast}(x_{m})&=z_{n_{m}}^{\ast}\left(z_{n_{m}}-4^{m}\displaystyle\sum_{k=1}^{m-1}z_{n_{k}}-2^{-m}\sum_{k=1}^{\infty}2^{-k}z_{n_{k}}\right)^{+}\\
&\geq z_{n_{m}}^{\ast}\left(z_{n_{m}}-4^{m}\displaystyle\sum_{k=1}^{m-1}z_{n_{k}}-2^{-m}\sum_{k=1}^{\infty}2^{-k}z_{n_{k}}\right)\\
&=z_{n_{m}}^{\ast}(z_{n_{m}})-4^{m}z_{n_{m}}^{\ast}\left(\displaystyle\sum_{k=1}^{m-1}z_{n_{k}}\Big)-2^{-m}z_{n_{m}}^{\ast}\Big(\sum_{k=1}^{\infty}2^{-k}z_{n_{k}}\right)\\&\geq \frac{1}{2}-\frac{4^{m}}{2^{2m+2}}-\frac{1}{2^{m}}=\frac{1}{2}-\frac{1}{4}-\frac{1}{2^{m}}=\frac{1}{4}-\frac{1}{2^{m}},
\end{align*}
from which it follows that %$z_{n_{m}}^{\ast}(x_{m})\geq \frac{1}{4}-\frac{1}{2^{m}}$, isto é, para todo ,
$$x_{n_{m}}^{\ast}(x_{m})\geq (4^{-1}-2^{-m})\|x_{n_{m}}^{\ast}\|\geq M(4^{-1}-2^{-m})$$
for every $m\in\mathbb{N}$.%, então para todo $m\in\mathbb{N}$,  $x_{n_{m}}^{\ast}(x_{m})\geq M(4^{-1}-2^{-m})$.
\end{proof}

From now on we work with operators between Banach lattices.

%If $T$ is a positive operator, we get the following.

\begin{theorem}\label{equivalentes1} The following are equivalent
for a positive operator $T \colon E \to F$:\\
{\rm (a)} $T^{\ast} \colon F^{\ast}\rightarrow E^{\ast}$ is disjoint $p$-convergent.\\
%%{\rm (b)} $q_{T^{\ast}}(y_{n}^{\ast})\rightarrow 0$ for all disjoint weakly $p$-summable sequence $(y_{n}^{\ast})_n$ in $F^{\ast}$.\\
%%{\rm (c)} $q_{T^{\ast}}(y_{n}^{\ast})\rightarrow 0$ for all positive disjoint weakly $p$-summable sequence $(y_{n}^{\ast})_n$ in $F^{\ast}$.\\
{\rm (b)} $y_{n}^{\ast}(T(x_{n}))\longrightarrow 0$ for every disjoint weakly $p$-summable sequence $(y_{n}^{\ast})_n$ in $F^{\ast}$  and every bounded sequence  $(x_{n})_n$ in $E$.\\
{\rm (c)} $y_{n}^{\ast}(T(x_{n}))\longrightarrow 0$ for every disjoint weakly $p$-summable sequence $(y_{n}^{\ast})_n$ in $F^{\ast}$ and every bounded disjoint sequence  $(x_{n})_n$ in $E$.\\
{\rm (d)} $y_{n}^{\ast}(T(x_{n}))\longrightarrow 0$ for every positive disjoint weakly $p$-summable sequence $(y_{n}^{\ast})_n$ in $F^{\ast}$  and every bounded disjoint sequence  $(x_{n})_n$ in $E$.\\
{\rm (e)} $y_{n}^{\ast}(T(x_{n}))\longrightarrow 0$ for every positive disjoint weakly $p$-summable sequence $(y_{n}^{\ast})_n$ in $F^{\ast}$  and every bounded positive disjoint sequence  $(x_{n})_n$ in $E$.\\
{\rm (f)} Regardless of the Banach lattice $G$ and the Riesz homomorphism $S \colon G\rightarrow E$,  $S^{\ast}T^{\ast} \colon F^{\ast}\rightarrow G^{\ast}$ is a disjoint $p$-convergent operator.\\
{\rm (g)} Regardless of the Riesz homomorphism $S \colon \ell_{1}\rightarrow E$, $S^{\ast}T^{\ast} \colon F^{\ast}\rightarrow \ell_{1}^{\ast}$ is a disjoint $p$-convergent operator.\\
{\rm (h)} Regardless of the Banach lattice $G$ and the positive operator $S \colon G\rightarrow E$,  $S^{\ast}T^{\ast} \colon F^{\ast}\rightarrow G^{\ast}$ is a disjoint $p$-convergent operator.\\
{\rm (i)} Regardless of the positive operator $S \colon \ell_{1}\rightarrow E$, $S^{\ast}T^{\ast} \colon F^{\ast}\rightarrow \ell_{1}^{\ast}$ is a disjoint $p$-convergent operator.
\end{theorem}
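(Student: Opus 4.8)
The plan is to prove the nine conditions equivalent by exploiting the two previous results, Lemma~\ref{equivalentes} and Lemma~\ref{lemaantigo}, together with the positivity of $T$. The backbone is the chain
$$\text{(a)}\Rightarrow\text{(b)}\Rightarrow\text{(c)}\Rightarrow\text{(d)}\Rightarrow\text{(e)}\Rightarrow\text{(a)},$$
and then a separate small loop handling (f)--(i). The implications (a)$\Rightarrow$(b), (b)$\Rightarrow$(c), (c)$\Rightarrow$(d) are essentially free: (a)$\Rightarrow$(b) is exactly (a)$\Rightarrow$(d) of Lemma~\ref{equivalentes} applied with $X=E$; (b)$\Rightarrow$(c) is trivial (restrict to disjoint $(x_n)_n$); (c)$\Rightarrow$(d) is trivial (positive sequences $(y_n^\ast)_n$ are a special case). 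The implication (d)$\Rightarrow$(e) is again trivial. So the only real content in the first loop is (e)$\Rightarrow$(a).

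For (e)$\Rightarrow$(a) I would argue by contrapositive, mimicking the (e)$\Rightarrow$(a) step of Lemma~\ref{equivalentes} but now repairing the lack of disjointness/positivity of the test sequence in $E$. Assume $T^\ast$ is not disjoint $p$-convergent; by \cite[Proposition 4.6]{fourie} pick a positive disjoint weakly $p$-summable $(y_n^\ast)_n$ in $F^\ast$ with $\|T^\ast(y_n^\ast)\|\ge\alpha>0$ for all $n$ (pass to a subsequence). Here is where positivity of $T$ enters: since $T\ge 0$, $T^\ast\ge 0$, so $T^\ast(y_n^\ast)\ge 0$, i.e.\ $(T^\ast(y_n^\ast))_n$ is a positive sequence in $E^\ast$ with norms bounded below by $\alpha$. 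It is also weak$^\ast$ null in $E^\ast$: indeed $(y_n^\ast)_n$ is weakly $p$-summable hence weakly null in $F^\ast$, so weak$^\ast$ null, and $T^\ast$ is weak$^\ast$-weak$^\ast$ continuous, so $T^\ast(y_n^\ast)\xrightarrow{w^\ast}0$. Now apply Lemma~\ref{lemaantigo} to the positive seminormalized weak$^\ast$ null sequence $(x_n^\ast):=(T^\ast(y_n^\ast))_n$ in $E^\ast$: we obtain an increasing $(n_m)_m$ and a positive disjoint bounded sequence $(x_m)_m\subset E$ with
$$x_{n_m}^\ast(x_m)=T^\ast(y_{n_m}^\ast)(x_m)=y_{n_m}^\ast(T(x_m))\ge\alpha(4^{-1}-2^{-m}),$$
which stays bounded away from $0$ for large $m$. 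The pair $\big((y_{n_m}^\ast)_m,(x_m)_m\big)$ — a positive disjoint weakly $p$-summable sequence in $F^\ast$ (a subsequence of the original, still positive, disjoint, weakly $p$-summable) paired against a bounded positive disjoint sequence in $E$ — violates (e). This is the main obstacle and the reason positivity of $T$ is essential: without it, $T^\ast(y_n^\ast)$ need not be positive and Lemma~\ref{lemaantigo} would not apply.

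For the operator-composition conditions, the cleanest route is (a)$\Rightarrow$(h)$\Rightarrow$(i)$\Rightarrow$(g) and (h)$\Rightarrow$(f)$\Rightarrow$(g), then close with (g)$\Rightarrow$(b) (or (g)$\Rightarrow$(e)). The step (a)$\Rightarrow$(h) is immediate: if $T^\ast$ is disjoint $p$-convergent and $(y_n^\ast)_n$ is disjoint weakly $p$-summable in $F^\ast$, then $\|S^\ast T^\ast(y_n^\ast)\|\le\|S^\ast\|\cdot\|T^\ast(y_n^\ast)\|\to 0$ (no positivity of $S$ needed, so this simultaneously gives (a)$\Rightarrow$(f)). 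The implications (h)$\Rightarrow$(i), (h)$\Rightarrow$(f), (f)$\Rightarrow$(g), (i)$\Rightarrow$(g) are trivial specializations. The substantive step is (g)$\Rightarrow$(b): given a disjoint weakly $p$-summable $(y_n^\ast)_n$ in $F^\ast$ and a bounded $(x_n)_n$ in $E$, I would like to realize the $x_n$ through a single operator $\ell_1\to E$; but (g) restricts $S$ to be \emph{positive}. The fix is to note that $y_n^\ast(T(x_n))$ is controlled by $|y_n^\ast|(|T(x_n)|)\le|y_n^\ast|(T(|x_n|))$ using $T\ge 0$, and then define $S\colon\ell_1\to E$ by $S(a)=\sum_n a_n|x_n|$, which \emph{is} positive and bounded (by $\sup_n\|x_n\|$). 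Also $(|y_n^\ast|)_n$ is positive disjoint weakly $p$-summable by \cite[Proposition 2.2]{fourie}. Then, exactly as in (g)$\Rightarrow$(d) of Lemma~\ref{equivalentes},
$$|y_n^\ast(T(x_n))|\le |y_n^\ast|(T(|x_n|))=|y_n^\ast|(T(S(e_n)))\le\|S^\ast T^\ast(|y_n^\ast|)\|\to 0,$$
using (g) applied to the positive operator $S$ and the sequence $(|y_n^\ast|)_n$. This closes the cycle. The only points demanding care are: justifying $|T(x_n)|\le T(|x_n|)$ and $|y_n^\ast|(T(|x_n|))=|y_n^\ast|(TS(e_n))$, the weak$^\ast$ continuity of $T^\ast$ in the (e)$\Rightarrow$(a) step, and invoking \cite[Corollary 2.3]{fourie} / \cite[Proposition 2.2]{fourie} to pass between $(y_n^\ast)_n$, $(|y_n^\ast|)_n$ and its positive/negative parts — all routine given the tools already developed.
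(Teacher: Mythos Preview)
Your argument for the first loop (a)$\Rightarrow$(b)$\Rightarrow$(c)$\Rightarrow$(d)$\Rightarrow$(e)$\Rightarrow$(a) is correct and matches the paper's, including the use of Lemma~\ref{lemaantigo} in (e)$\Rightarrow$(a).

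There is, however, a genuine gap in your closing step (g)$\Rightarrow$(b). You write that ``(g) restricts $S$ to be positive'', but in fact (g) requires $S$ to be a \emph{Riesz homomorphism}, which is strictly stronger. Your operator $S(a)=\sum_n a_n|x_n|$ is certainly positive, but it is a Riesz homomorphism if and only if the vectors $|x_n|$ are pairwise disjoint, and in (b) the sequence $(x_n)_n$ is merely bounded. So you cannot invoke (g) for this $S$, and (g)$\Rightarrow$(b) does not go through as written. Without this implication, (g) (and hence (f)) is left dangling: you have (a)$\Rightarrow$(h)$\Rightarrow$(f)$\Rightarrow$(g) and (i)$\Rightarrow$(g), but no arrow out of (g) back into the established cycle.

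The repair is precisely the parenthetical alternative you mention: prove (g)$\Rightarrow$(e) instead. In (e) the sequence $(x_n)_n$ is positive and disjoint, so $S(a)=\sum_n a_n x_n$ \emph{is} a Riesz homomorphism (one checks $|S(a)|=S(|a|)$ using disjointness and continuity of the lattice operations), and then $|y_n^\ast(Tx_n)|\le\|S^\ast T^\ast(y_n^\ast)\|\to 0$ by (g). This is exactly the route the paper takes. With this correction your scheme becomes (a)$\Rightarrow$(h)$\Rightarrow$(f)$\Rightarrow$(g)$\Rightarrow$(e) and (h)$\Rightarrow$(i)$\Rightarrow$(e), which, combined with the first loop, yields the full equivalence.
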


\begin{proof}
The equivalence (a)$\Leftrightarrow$(b) was proved in Lemma \ref{equivalentes} and the implications (b)$\Rightarrow$ (c)$\Rightarrow$(d)$\Rightarrow$(e) and (f)$\Rightarrow$(g) are straightforward.

(e)$\Rightarrow$(a) If $T^\ast$ is not disjoint $p$-convergent, as we did in the proof of Lemma \ref{equivalentes} there exists a positive disjoint weakly $p$-summable sequence $(y_n^\ast)_n$ in $F^\ast$ such that  $\norma{T^\ast(y_n^\ast)} \geq \alpha$ for every $n \in \N$ and some $\alpha > 0$. Since $T^\ast$ is a positive weak*-weak* continuous operator, $(T^\ast y_n^\ast)_n$ is a positive weak* null sequence. By Lemma \ref{lemaantigo} there exist an increasing sequence $(n_k)_k \subseteq \N$ and a bounded positive disjoint sequence $(x_k)_k \subseteq E$ such that
 $T^{\ast}(y_{n_{k}}^{\ast})(x_{k})\geq \alpha(4^{-1}-2^{-k})$ for every $k\in\mathbb{N}$. From
$$ \alpha(4^{-1}-2^{-k})\leq T^{\ast}(y_{n_{k}}^{\ast})(x_{k})=y_{n_{k}}^{\ast}(T(x_{k})),$$
we obtain $\liminf\limits_{k \to \infty} y_{n_{k}}^{\ast}(T(x_{k})) \geq \alpha/4$, which contradicts (e).

(a)$\Rightarrow$(f) This implication follows from Lemma \ref{equivalentes}.

(g)$\Rightarrow$(e) Let $(y_n^\ast)_n$ be a positive disjoint weakly $p$-summable sequence in $F^\ast$ and let $(x_n)_n$ be a bounded positive disjoint sequence in $E$. It is clear that $S \colon \ell_1 \to E$ given by $S(a) = \sum\limits_{n=1}^\infty a_n x_n$ for every $a = (a_n)_n \in \ell_1$ is a well defined bounded linear operator. Using  the continuity of the lattice operations and that  $(x_n)_n$ is a positive disjoint sequence, we get
      $$|S(a)|=\Big|\lim_{m\rightarrow\infty}\displaystyle\sum_{n=1}^{m} a_n x_n \Big|=\lim_{m\rightarrow\infty}\Big|\displaystyle\sum_{n=1}^{m} a_n x_n\Big|=\lim_{m\rightarrow\infty}\displaystyle\sum_{n=1}^{m} |a_{n}| x_n =S(|a|)$$
      for every $a = (a_n)_n \in \ell_1$, proving that $S$ is a Riesz homomorphism. So $S^\ast T^\ast$ is disjoint $p$-convergent by assumption, which implies that
      $ |y_n^\ast(Tx_n)| \leq \norma{S^\ast T^\ast (y_n^\ast)} \longrightarrow 0. $

It is clear that the operator $S$ we have just worked with is positive, so the implication (i)$\Rightarrow$(e) follows analogously. The implication (a) $\Rightarrow$ (h) follows from Lemma \ref{equivalentes} and the implication (h) $\Rightarrow$ (i) is immediate.
%(i)$\Rightarrow$(e) Let $(y_n^\ast)_n$ be a positive disjoint weakly $p$-summable sequence in $F^\ast$ and let $(x_n)_n$ be a bounded positive disjoint sequence in $E$. Defining $S \colon \ell_1 \to E$ by $S(a) = \displaystyle \sum_{n=1}^\infty x_n a_n$ for every $a = (a_n)_n \in \ell_1$, we have that $S$ is a positive operator and so $S^\ast T^\ast$ is disjoint $p$-convergent. Therefore $ |y_n^\ast(Tx_n)| \leq \norma{S^\ast T^\ast (y_n^\ast)} \longrightarrow 0. $
\end{proof}

%%It was proved in \cite[Theorem 3.8]{ali} that if the adjoint of a disjoint $p$-convegernt operator $T: E \to F$ is also disjoint $p$-convergent then $E^\ast$ has order continuous norm or $F^\ast$ has the positive Schur property of order $p$. Since

Recall that a Banach lattice $E$ has the positive Schur property of order $p$ if every weakly $p$-summable sequence $(x_n)_n \subseteq E^+$ is norm null, or, equivalently, if every disjoint weakly $p$-summable sequence $(x_n)_n \subseteq E^+$ is norm null (see \cite[Proposition 3.3]{fourie}). We omit the (easy) proof of the following characterization.

\begin{lemma} \label{psp1}
    A Banach lattice $E$ fails the positive Schur property of order $p$ if and only if $E$ contains a positive normalized disjoint weakly $p$-summable sequence.
\end{lemma}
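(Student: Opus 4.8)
The plan is to deduce both implications from the equivalent formulation of the positive Schur property of order $p$ recalled immediately before the statement, namely that $E$ has the positive Schur property of order $p$ if and only if every disjoint weakly $p$-summable sequence in $E^+$ is norm null (see \cite[Proposition 3.3]{fourie}). This reduces the whole lemma to a normalization argument.

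For the direction ``$E$ contains such a sequence $\Rightarrow$ $E$ fails the property'', I would simply observe that if $(x_n)_n \subseteq E^+$ is normalized, disjoint and weakly $p$-summable, then it is in particular a disjoint weakly $p$-summable sequence in $E^+$ with $\norma{x_n} = 1$ for every $n$, so it is not norm null; by the quoted equivalence $E$ fails the positive Schur property of order $p$. Nothing else is needed in this direction.

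For the converse, assuming $E$ fails the positive Schur property of order $p$, the quoted equivalence furnishes a disjoint weakly $p$-summable sequence $(x_n)_n \subseteq E^+$ that is not norm null. I would then pass to a subsequence along which $\norma{x_n} \geq \varepsilon$ for some $\varepsilon > 0$ and all $n$, so that in particular each $x_n \neq 0$, and set $y_n := x_n/\norma{x_n}$. The sequence $(y_n)_n$ is positive and normalized by construction, and it is disjoint because positive scalar multiples of pairwise disjoint positive elements are again pairwise disjoint. The only point to verify is that $(y_n)_n$ is still weakly $p$-summable, and this holds because $(1/\norma{x_n})_n$ is bounded by $1/\varepsilon$: for each $x^\ast \in E^\ast$ one has $\sum_n |x^\ast(y_n)|^p = \sum_n \norma{x_n}^{-p}\,|x^\ast(x_n)|^p \leq \varepsilon^{-p}\sum_n |x^\ast(x_n)|^p < \infty$. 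Hence $(y_n)_n$ is a positive normalized disjoint weakly $p$-summable sequence in $E$.

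I do not expect any genuine obstacle here: the single substantive (and entirely routine) ingredient is the stability of weak $p$-summability under multiplication by a bounded scalar sequence, which is the displayed estimate above, and everything else is immediate from the definitions together with the already-recalled equivalent form of the positive Schur property of order $p$.
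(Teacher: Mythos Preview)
Your argument is correct and is precisely the routine normalization argument the authors have in mind; note that the paper explicitly omits the proof as easy, so there is nothing further to compare.
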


%\begin{proof}
%    If $E$ does not have the the positive Schur property of order $p$, there exists a disjoint weakly $p$-summable sequence $(y_n)_n \subseteq E^+$ which is not norm null. In particular, there exists a subsequence $(n_k)_k \subseteq \N$ such that $\norma{y_{n_k}} \geq \varepsilon$ for all $k \in \N$ and some $\varepsilon > 0$. Taking $x_k = y_{n_k} / \norma{y_{n_k}}$ for each $k \in \N$, we have that $(x_k)_k$ is a positive disjoint weakly $p$-summable sequence such that $\norma{x_k} = 1$ for all $k \in \N$. The converse is immediate.
%\end{proof}

In \cite[Theorem 3.8]{ali} it is proved that if the adjoint of every disjoint $p$-convergent operator $T \colon E \to F$ is disjoint $p$-convergent, then  $E^\ast$ has order continuous norm or $F^\ast$ has the positive Schur property of order $p$. By Lemma \ref{lema1}, the next result is an improvement of \cite[Theorem 3.8]{ali} for positive operators. %%\textcolor{red}{Acho que s\'o a partir daqui \'e necess\'ario trabalhar com operadores entre reticulados.}

\begin{theorem} \label{teo4}
     If the adjoint of every positive order weakly compact  operator $T \colon E \to F$ is disjoint $p$-convergent, then  $E^\ast$ has order continuous norm or $F^\ast$ has the positive Schur property of order $p$.
\end{theorem}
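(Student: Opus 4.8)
The plan is to prove the contrapositive: assuming that $E^*$ does not have order continuous norm \emph{and} $F^*$ fails the positive Schur property of order $p$, I will construct a single positive order weakly compact operator $T\colon E\to F$ whose adjoint $T^*$ is not disjoint $p$-convergent, contradicting the hypothesis.

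First I would assemble the data furnished by the two negated properties. Since $F^*$ fails the positive Schur property of order $p$, Lemma~\ref{psp1} provides a positive normalized disjoint weakly $p$-summable sequence $(y_n^*)_n$ in $F^*$; for each $n$ I pick $y_n\in F^+$ with $\norma{y_n}\le 1$ and $y_n^*(y_n)\ge 1/2$ (the norm of a positive functional is attained on the positive part of the ball). Since $E^*$ lacks order continuous norm, a classical characterization of order continuity (through order bounded disjoint sequences) produces an order bounded disjoint sequence in $E^*$ that is not norm null; replacing its terms by their moduli and passing to a subsequence, I may assume it is a sequence $(\psi_n)_n$ in $(E^*)^+$ with $\psi_n\le\Psi$ for some fixed $\Psi\in(E^*)^+$ and $\norma{\psi_n}\ge\varepsilon$ for some $\varepsilon>0$. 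Disjointness and positivity give $\sum_{n=1}^N\psi_n=\bigvee_{n=1}^N\psi_n\le\Psi$ for every $N$. Finally I choose $x_n\in E^+$ with $\norma{x_n}\le 1$ and $\psi_n(x_n)\ge\varepsilon/2$.

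Next I would define $T\colon E\to F$ by $T(x)=\sum_{n=1}^\infty\psi_n(x)\,y_n$. This series converges absolutely, since $\sum_n|\psi_n(x)|\,\norma{y_n}\le\sum_n\psi_n(|x|)\le\Psi(|x|)\le\norma{\Psi}\cdot\norma{x}$, so $T$ is a bounded operator, and it is clearly positive. To see that $T$ is order weakly compact, note that $T=R\circ Q$, where $Q\colon E\to\ell_1$, $Q(x)=(\psi_n(x))_n$, is a well-defined bounded (positive) operator and $R\colon\ell_1\to F$, $R(a)=\sum_n a_n y_n$, is bounded; it therefore suffices to show $Q$ is order weakly compact. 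Given an order bounded disjoint sequence $(z_k)_k\subseteq E$, say $|z_k|\le w\in E^+$, we have $\norma{Q(z_k)}_1\le\sum_n\psi_n(|z_k|)$; splitting this sum at a large index $M$, the tail $\sum_{n>M}\psi_n(|z_k|)\le\sum_{n>M}\psi_n(w)$ is small uniformly in $k$ because $\sum_n\psi_n(w)\le\Psi(w)<\infty$, while the finite part $\big(\sum_{n\le M}\psi_n\big)(|z_k|)\to 0$ as $k\to\infty$, since for any positive functional $\eta$ the partial sums $\sum_{k=1}^N\eta(|z_k|)=\eta\big(\bigvee_{k=1}^N|z_k|\big)\le\eta(w)$ stay bounded. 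Hence $\norma{Q(z_k)}_1\to 0$, so $Q$, and thus $T$, is order weakly compact.

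Finally I would show $T^*$ is not disjoint $p$-convergent. Using that everything in sight is positive,
\[
\norma{T^*(y_n^*)}=\sup_{x\in B_E}|y_n^*(T(x))|\ \ge\ y_n^*(T(x_n))=\sum_{j}\psi_j(x_n)\,y_n^*(y_j)\ \ge\ \psi_n(x_n)\,y_n^*(y_n)\ \ge\ \frac{\varepsilon}{4}
\]
for every $n\in\N$. Thus $(y_n^*)_n$ is a positive disjoint weakly $p$-summable sequence in $F^*$ for which $(T^*(y_n^*))_n$ is not norm null, so $T^*$ is not disjoint $p$-convergent by Theorem~\ref{teo1}. Since $T$ is positive and order weakly compact, this contradicts the assumption, and the theorem follows. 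I expect the heart of the argument — and the only nonroutine step — to be the construction of $T$: the key realization is that the \emph{order boundedness} of the disjoint sequence coming from the failure of order continuity of the norm on $E^*$ is precisely what makes the defining series converge and, more importantly, forces $T$ to be order weakly compact, while the nontrivial support vectors $y_n$ attached to the weakly $p$-summable sequence in $F^*$ are what prevent $T^*$ from being disjoint $p$-convergent. (Alternatively one could disjointify the $x_n$ using Lemma~\ref{lemaantigo} and appeal to condition (c) of Theorem~\ref{equivalentes1}, but this is not necessary.)
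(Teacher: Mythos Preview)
Your proof is correct and follows essentially the same approach as the paper: both argue by contraposition, extract a positive order bounded disjoint sequence $(\psi_n)_n$ in $E^*$ (from the failure of order continuity) and a positive normalized disjoint weakly $p$-summable sequence $(y_n^*)_n$ in $F^*$ (from the failure of the positive Schur property of order $p$), define the same operator $T(x)=\sum_n\psi_n(x)\,y_n$ via the factorization $E\to\ell_1\to F$, check that $T$ is positive and order weakly compact, and then show $\norma{T^*(y_n^*)}\ge\psi_n(x_n)\,y_n^*(y_n)$ is bounded away from zero. Your verification of order weak compactness via the $M$-splitting is slightly more laborious than needed --- the paper simply observes $\norma{T(z_k)}\le\Psi(|z_k|)\to 0$ since an order bounded disjoint sequence $(|z_k|)_k$ is weakly null --- but your argument is valid, and your explicit insistence that $y_n\in F^+$ (to guarantee $T\ge 0$) is in fact a point the paper leaves implicit.
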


\begin{proof} Suppose that the norm of $E^\ast$ is not order continuous norm and that
     $F^{\ast}$ fails the positive Schur property of order $p$.
     On the one hand, as $E^\ast$ does not have order continuous norm, there exists a non-norm null positive disjoint order bounded sequence $(x_{n}^{\ast})_n$ in $E^{\ast}$ (see \cite[Theorem 2.4.2]{meyer}). Without loss of generality, we may assume that $\norma{x_n^\ast} = 1$ for every $n \in \N$,  thus we may find a sequence $(x_n)_n \subset S_{E}^+$ such that $x_n^\ast(x_n) \geq 1/2$ for every $n \in \N$.
     %%% The same argument used in the proof of \cite[Theorem 3.5]{botelhogarcia} shows that there exist a positive disjoint order bounded sequence $(x_{n}^{\ast})_n \subseteq E^{\ast}$ and a positive sequence $(x_n)_n\subseteq E$ such that $\norma{x_{n}^{\ast}} = \norma{x_n} = 1$ and $x_n^{\ast}(x_n) \geq 1/2$ for every $n \in \N$.
      Letting $x^* \in E^\ast$ be such that $0\leq x_{n}^{\ast}\leq x^*$ for every $n$, we have
     $$\sum_{n=1}^{m}|x_{n}^{\ast}(x)| \leq \sum_{n=1}^{m}x_{n}^{\ast}(|x|)=\Big(\bigvee_{n=1}^{m}x_{n}^{\ast}\Big)(|x|)\leq x^*(|x|)$$
     for every $m \in \N$ and every $x \in E$. Thus, $S \colon E\rightarrow \ell_{1}$ given by $S(x)=(x_{n}^{\ast}(x))_n$ is a well defined (bounded) linear operator.
     On the other hand, by the failure of the positive Schur property of order $p$ by $F^{\ast}$ and Lemma \ref{psp1}, there exists a positive disjoint weakly $p$-summable sequence $(y_n^{\ast})_n$ in $F^{\ast}$ such that $\norma{y_n^{\ast}} =1$ for all $n \in \N$. Moreover, for each $n \in \N$ there exists $y_n \in F$ such that $\|y_{n}\|=1$ and $y_{n}^{\ast}(y_{n})\geq \frac{1}{2}$. Defining $R \colon \ell_1 \rightarrow F$ by $R((a_{n})_n)=\sum\limits_{n=1}^{\infty}a_{n}y_{n}$ and $T := R \circ S$, we have that $T$ is a positive operator from $E$ to $F$ such that $T(x)=\sum\limits_{n=1}^{\infty}x_{n}^{\ast}(x)y_{n}$ for every $x \in E$. Since
    $$\sum_{n=1}^{m}\|x_{n}^{\ast}(x)y_{n}\|=\sum_{n=1}^{m}|x_{n}^{\ast}(x)|\cdot\|y_{n}\|\leq \sum_{n=1}^{m}x_{n}^{\ast}(|x|)=\Big(\bigvee_{n=1}^{m} x_{n}^{\ast}\Big)(|x|)\leq x^*(|x|)$$
     for all $x\in E$ and $m\in\N$, we get $\norma{T(x)} \leq x^*(|x|)$ for every $x\in E$.

    Let us prove that $T$ is order weakly compact. Given an order bounded disjoint sequence $(w_{n})_n$ in $E$, let $w,w_0 \in E$ be such that $w\leq w_n\leq w_0$ for every $n \in \N$. An elementary calculation gives $w\leq |w_{n}|\leq w_{0}+2|w|$, so the sequence $(|w_{n}|)_n$ is disjoint and order bounded in $E^{+}$. It follows that, for all $z^{\ast}\in E^{\ast}$ and $k\in\mathbb{N}$,
    $$\sum_{n=1}^{k}|z^{\ast}(|w_{n}|)|\leq \sum_{n=1}^{k}|z^{\ast}|(|w_{n}|)=|z^{\ast}|\Big(\sum_{n=1}^{k}(|w_{n}|)\Big)= |z^{\ast}|\Big(\bigvee_{n=1}^{k}(|w_{n}|)\Big)\leq |z^{\ast}|(w_{0}+2|w|),$$
    proving that $(z^{\ast}(|w_{n}|))_n \in \ell_{1}$. In particular, $(|w_{n}|)_n$ is weakly null in $E$, so $\|T(w_{n})\|\leq x^*(|w_{n}|)\longrightarrow 0$. By \cite[Theorem 5.57]{alip}, $T$ is order weakly compact.
    For any $k \in \N$ we have $x_k \geq 0$, $\|x_k\| = 1$ and
    $$ T^{\ast}(y_{k}^{\ast})(x_k)=\displaystyle\sum_{n=1}^{\infty}x_{n}^{\ast}(x_k)y_{k}^{\ast}(y_{n})\geq x_{k}^{\ast}(x_k)y_{k}^{\ast}(y_{k})\geq \frac14% \dfrac{x_{k}^{\ast}(x_k)}{2}\geq \frac12
    , $$
          so $\|T^{\ast}(y_{k}^{\ast})\|\geq \frac{1}{4}$ for every $k \in \N$, which implies that $T^\ast$ fails to be disjoint $p$-convergent.
\end{proof}

\begin{example}\rm Let us see that Theorem \ref{teo4} is a concrete improvement of
%Since, by Lemma \ref{lema1}, every disjoint $p$-convergent operator is order weakly compact, Theorem \ref{teo4} improves
\cite[Theorem 3.8]{ali} for positive operators. Indeed,  ${\rm id}_{c_0}$ is an order weakly compact operator which is not disjoint $p$-convergent such that $(c_0)^\ast = \ell_1$ has order continuous norm and the Schur property, hence the positive Schur property of order $p$.
\end{example}

Although we do not know whether the converse of Theorem \ref{teo4} holds we can prove a partial converse of \cite[Theorem 3.8]{ali}.

\begin{theorem}
    If $E$ has the disjoint property of order $p$ or $F^\ast$ has the positive Schur property of order $p$, then the adjoint of every positive disjoint $p$-convergent operator from $E$ to $F$ is disjoint $p$-convergent.
\end{theorem}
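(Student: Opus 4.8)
The plan is to split according to the hypothesis and, in each case, reduce to a characterization already available in the paper. So let $T\colon E\to F$ be a positive disjoint $p$-convergent operator.

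\smallskip
\textbf{Case 1: $F^\ast$ has the positive Schur property of order $p$.} Here I would not even use that $T$ is disjoint $p$-convergent. Given a positive disjoint weakly $p$-summable sequence $(y_n^\ast)_n$ in $F^\ast$, the positive Schur property of order $p$ forces $\norma{y_n^\ast}\longrightarrow 0$, hence $\norma{T^\ast(y_n^\ast)}\leq\norma{T^\ast}\cdot\norma{y_n^\ast}\longrightarrow 0$. Since, by the equivalence (a)$\Leftrightarrow$(b) of Theorem \ref{teo1} applied to the operator $T^\ast\colon F^\ast\to E^\ast$ (with $F^\ast$ as the Banach lattice and $E^\ast$ as the Banach space), disjoint $p$-convergence need only be tested on positive disjoint weakly $p$-summable sequences, this already shows that $T^\ast$ is disjoint $p$-convergent.

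\smallskip
\textbf{Case 2: $E$ has the disjoint property of order $p$.} This is the case where positivity and disjoint $p$-convergence of $T$ are used, and I would verify condition (e) of Theorem \ref{equivalentes1}. Let $(y_n^\ast)_n$ be a positive disjoint weakly $p$-summable sequence in $F^\ast$ and let $(x_n)_n$ be a bounded positive disjoint sequence in $E$. Because $E$ has the disjoint property of order $p$, the bounded disjoint sequence $(x_n)_n$ is weakly $p$-summable, so it is a positive disjoint weakly $p$-summable sequence in $E$; since $T$ is disjoint $p$-convergent, Theorem \ref{teo1}(b) gives $\norma{T(x_n)}\longrightarrow 0$. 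As $(y_n^\ast)_n$ is weakly $p$-summable it is bounded, say $\norma{y_n^\ast}\leq M$ for all $n$, whence $|y_n^\ast(T(x_n))|\leq M\norma{T(x_n)}\longrightarrow 0$. This is exactly condition (e) of Theorem \ref{equivalentes1}, and the equivalence (e)$\Leftrightarrow$(a) there yields that $T^\ast$ is disjoint $p$-convergent.

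\smallskip
I do not expect a serious obstacle: the whole argument amounts to matching the hypotheses of the two auxiliary results, the disjoint property of order $p$ being precisely what upgrades "bounded disjoint" to "weakly $p$-summable", after which disjoint $p$-convergence of $T$ finishes the job. The only point needing a little care is that in Case 2 one genuinely must route the argument through Theorem \ref{equivalentes1} rather than attempting a direct estimate on $\norma{T^\ast(y_n^\ast)}$ — there is no elementary reason for that norm to tend to $0$ without passing through the bilinear expression $y_n^\ast(T(x_n))$ — and that this is exactly the place where the positivity of $T$ is needed, since Theorem \ref{equivalentes1} is stated for positive operators.
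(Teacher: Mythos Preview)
Your proof is correct. The treatment of the case where $F^\ast$ has the positive Schur property of order $p$ is essentially identical to the paper's.

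For the case where $E$ has the disjoint property of order $p$, however, your route is genuinely different from the paper's. The paper first invokes Theorem~\ref{teo3} to upgrade $T$ to an almost Dunford--Pettis operator, then uses Example~\ref{djp2}(3) to get that $E^\ast$ has order continuous norm, and finally appeals to an external duality result (\cite[Theorem 5.1]{aqz}) to conclude that $T^\ast$ is almost Dunford--Pettis, hence disjoint $p$-convergent. Your argument bypasses the almost Dunford--Pettis detour entirely: you feed the disjoint property of order $p$ directly into condition (e) of Theorem~\ref{equivalentes1}, using only that $T$ sends disjoint weakly $p$-summable sequences to norm null ones. This is more self-contained (no external reference needed) and arguably more in the spirit of the paper, since Theorem~\ref{equivalentes1} was built precisely to handle such bilinear conditions. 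The paper's route, on the other hand, gives a slightly stronger intermediate conclusion ($T^\ast$ is almost Dunford--Pettis, not merely disjoint $p$-convergent), which could be of independent interest.
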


\begin{proof} Let $T \colon E \to F$ be a positive disjoint $p$-convergent operator.    Assume first that $E$  has the disjoint property of order $p$. It follows from Theorem \ref{teo3} that $T$ is an almost Dunford-Pettis operator. Moreover, since $E^\ast$ has order continuous norm (Example \ref{djp2}(3)), we obtain from \cite[Theorem 5.1]{aqz} that $T^\ast$ is almost Dunford-Pettis, hence disjoint $p$-convergent.

       Suppose now that $F^\ast$ has the positive Schur property of order $p$. If $(y_n^\ast)_n$ is a positive disjoint weakly $p$-summable sequence in $F^\ast$, it follows from the assumption that $\norma{y_n^\ast} \longrightarrow 0$, hence $\norma{T^\ast y_n^\ast} \longrightarrow 0$.
\end{proof}

%%\textcolor{blue}{A partir daqui tudo tem que ser lido com muito cuidado, por isso n\~ao colocarei mais azuis}

It is clear that the condition of $E^*$ having order continuous norm or $F^*$ having the positive Schur property of order $p$ plays a key role in the subject. Pursuing a characterization of this condition, in \cite{ali} an operator $T \colon E \to F$ between Banach lattices is said to be almost weak $p$-convergent if $y_n^\ast(Tx_n) \longrightarrow 0$ whenever $(x_n)_n$ is weakly null in $ E$ and $(y_n^\ast)_n$ is disjoint weakly $p$-summable in $ F^\ast$ (see \cite[Theorem 5.10]{ali}). In \cite[Theorem 5.11]{ali} it is proved a characterization when $E^*$ has order continuous norm or $F^\ast$ has the positive Schur property of order $p$ under the condition that every $p$-convergent operator from $E$ to $F$ is almost weak $p$-convergent. The final purpose of this paper is to obtain characterizations not depending on this condition.
%A natural question, then, is what is equivalent to the the thesis obtained in Theorem \ref{teo4}. One may look at \cite[Theorem 5.11]{ali} where it is proved that the adjoint of a positive operator $T \colon E \to F$ which is both $p$-convergent and almost weak $p$-convergent is disjoint $p$-convergent if and only if $E^\ast$ has order continuous norm or $F^\ast$ has the positive Schur property of order $p$. In Theorem \ref{teo5} below we will give another equivalent condition to this fact. Let us first recall the definition of an almost weak $p$-convergent operator.
First it is worth noting that the implication (a)$\Rightarrow$(d) of Lemma \ref{equivalentes} gives the following:

 \begin{lemma}\label{nlemn} If the adjoint of an operator between Banach lattices is disjoint $p$-convergent, then the operator  is almost weak $p$-convergent.
 \end{lemma}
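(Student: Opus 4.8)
The plan is to obtain this statement as an immediate consequence of Lemma~\ref{equivalentes}. Let $T \colon E \to F$ be an operator between Banach lattices and suppose that $T^\ast \colon F^\ast \to E^\ast$ is disjoint $p$-convergent. Regarding the Banach lattice $E$ simply as a Banach space, Lemma~\ref{equivalentes} applies to $T$ (with the Banach space denoted $X$ there being $E$), so from the validity of condition (a) we obtain the validity of condition (d): one has $y_n^\ast(T(x_n)) \longrightarrow 0$ for every disjoint weakly $p$-summable sequence $(y_n^\ast)_n$ in $F^\ast$ and every bounded sequence $(x_n)_n$ in $E$.

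It then remains only to match this with the definition of almost weak $p$-convergence. Given a weakly null sequence $(x_n)_n$ in $E$ and a disjoint weakly $p$-summable sequence $(y_n^\ast)_n$ in $F^\ast$, note that a weakly null sequence is weakly bounded, hence norm bounded by the uniform boundedness principle, so $(x_n)_n$ is in particular a bounded sequence in $E$. Invoking condition (d) of Lemma~\ref{equivalentes} then yields $y_n^\ast(T(x_n)) \longrightarrow 0$, which is precisely what is required for $T$ to be almost weak $p$-convergent. I do not anticipate any genuine obstacle here: the only point worth recording is that weak nullity of $(x_n)_n$ is strictly stronger than the mere boundedness needed to invoke (d), so the hypothesis of the cited implication is met. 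Alternatively, one can argue directly, without passing through Lemma~\ref{equivalentes}: since $T^\ast$ is disjoint $p$-convergent one has $\norma{T^\ast(y_n^\ast)} \longrightarrow 0$, whence $|y_n^\ast(T(x_n))| = |T^\ast(y_n^\ast)(x_n)| \leq \norma{T^\ast(y_n^\ast)}\cdot\norma{x_n} \longrightarrow 0$ because $(x_n)_n$ is bounded.
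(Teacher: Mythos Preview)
Your proof is correct and matches the paper's approach exactly: the paper also derives this lemma directly from the implication (a)$\Rightarrow$(d) of Lemma~\ref{equivalentes}. Your added remark that weakly null sequences are bounded and the alternative direct estimate via $\norma{T^\ast(y_n^\ast)}$ are fine elaborations, but the core argument is the same.
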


Let us see that the converse of the implication above does not hold in general even for positive operators.

\begin{example} \label{exemplo}\rm
    %Letting $T = I_{\ell_1}$ be the identity operator on $\ell_1$, we have that $T$ is an almost weak $p$-convergent operator whose adjoint is not disjoint $p$-convergent.  Indeed, by one hand,
     On the one hand, for every weakly null sequence $(x_n)_n$ in $\ell_1$ and every disjoint weakly $p$-summable sequence $(y_n^\ast)_n$ in $(\ell_1)^\ast$, we have $ |y_n^\ast(x_n)| \leq \|{y_n^\ast}\|\cdot \norma{x_n} \longrightarrow 0 $
    because $(y_n^\ast)_n$ is bounded and $\ell_1$ has the Schur property. This shows that the identity operator ${\rm id}_{\ell_1}$ on $\ell_1$ is almost weak $p$-convergent.
        On the other hand, since the natural inclusion $ \ell_{p^{\ast}} \hookrightarrow \ell_\infty$ is not compact, $\ell_\infty$ fails the Schur property of order $p$ \cite[p.\,879]{fourie}, so $\ell_\infty$ fails the positive Schur property of order $p$ by \cite[Lemma 2.6 and Proposition 3.11]{fourie}. Therefore, ${\rm id}_{\ell_\infty} = ({\rm id}_{\ell_1})^*$ is not disjoint $p$-convergent.
\end{example}

%  $$T^* \mbox{ is disjoint $p$-convergent} \Longrightarrow T \mbox{ is almost weak $p$-convergent.}$$
 Our characterization (Theorem \ref{teo5}) shows that $E^*$ has order continuous norm or $F^\ast$ has the positive Schur property of order $p$ if and only if the converse of Lemma \ref{nlemn} holds for positive operators. Among other things, the proof depends on the the next lemma.

\begin{lemma} \label{lem2}
    A positive operator $T \colon E \to F$ is almost weak $p$-convergent if and only if $y_n^\ast(Tx_n) \longrightarrow 0$ whenever $(x_n)_n$ is disjoint weakly null in $E$ and $(y_n^\ast)_n$ is disjoint weakly $p$-summable sequence in $F^*$.
\end{lemma}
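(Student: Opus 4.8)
The \textbf{``only if''} direction is immediate: if $T$ is almost weak $p$-convergent then $y_n^\ast(Tx_n)\to 0$ for every weakly null $(x_n)_n$ in $E$ and every disjoint weakly $p$-summable $(y_n^\ast)_n$ in $F^\ast$, and a disjoint weakly null sequence is in particular weakly null. So the content is the converse, which I would prove by contradiction. First I would reduce to positive $(y_n^\ast)_n$: writing $y_n^\ast(Tx_n)=(y_n^\ast)^+(Tx_n)-(y_n^\ast)^-(Tx_n)$ and recalling from \cite[Corollary 2.3]{fourie} that $((y_n^\ast)^+)_n$ and $((y_n^\ast)^-)_n$ are positive disjoint weakly $p$-summable whenever $(y_n^\ast)_n$ is disjoint weakly $p$-summable, it is enough to show that $y_n^\ast(Tx_n)\to 0$ whenever $(x_n)_n$ is weakly null in $E$ and $(y_n^\ast)_n$ is positive disjoint weakly $p$-summable in $F^\ast$.

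So suppose not: there are such sequences, an $\varepsilon>0$, and, after passing to a subsequence, $|y_n^\ast(Tx_n)|\geq\varepsilon$ for every $n$. Put $M:=\sup_n\norma{x_n}<\infty$ and $z_n^\ast:=T^\ast(y_n^\ast)$. Since $T$ is positive, $z_n^\ast\geq 0$; since $(y_n^\ast)_n$ is weakly $p$-summable it is weak$^\ast$ null, and $T^\ast$ is bounded and weak$^\ast$-weak$^\ast$ continuous, so $(z_n^\ast)_n$ is a bounded positive weak$^\ast$ null sequence in $E^\ast$. Moreover $z_n^\ast(|x_n|)\geq|z_n^\ast(x_n)|=|y_n^\ast(Tx_n)|\geq\varepsilon$ for every $n$. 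The key step is then to run the Dodds--Fremlin disjointification exactly as in the proof of Lemma \ref{lemaantigo}, but feeding it the \emph{prescribed} test vectors $|x_n|$ in place of the vectors $z_n$ used there. Concretely, choosing an increasing sequence $(n_m)_m$ so that $z_{n_m}^\ast\big(\sum_{k<m}|x_{n_k}|\big)$ is as small as we wish (possible because $(z_n^\ast)_n$ is weak$^\ast$ null and $\sum_{k<m}|x_{n_k}|$ is a fixed vector at stage $m$) and setting $u_m:=\big(|x_{n_m}|-4^m\sum_{k<m}|x_{n_k}|-2^{-m}\sum_{k\geq 1}2^{-k}|x_{n_k}|\big)^+$, one obtains a positive disjoint sequence $(u_m)_m$ in $E$ with $0\leq u_m\leq|x_{n_m}|$ and $z_{n_m}^\ast(u_m)\geq\varepsilon/2$ for all large $m$; the estimate is the one in Lemma \ref{lemaantigo}, now with $\varepsilon$ playing the role of $1/2$ and the bounds $\sup_n\norma{z_n^\ast}<\infty$ and $\norma{|x_n|}\leq M$ absorbing the constants.

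To finish, I would use that $(x_n)_n$ is weakly null, so that $W:=\{x_n:n\in\N\}$ is relatively weakly compact, and hence every disjoint sequence contained in $\sol{W}$ is weakly null (the classical disjointification property of solid hulls of relatively weakly compact sets; cf.\ \cite[Theorem 4.34]{alip}, which is the unconditional counterpart of Lemma \ref{lemafourie}). Since $0\leq u_m\leq|x_{n_m}|$, the disjoint sequence $(u_m)_m$ lies in $\sol{W}$, hence is weakly null. Thus $(u_m)_m$ is disjoint weakly null in $E$ and $(y_{n_m}^\ast)_m$ is disjoint weakly $p$-summable in $F^\ast$, so the hypothesis forces $y_{n_m}^\ast(Tu_m)\to 0$; but $y_{n_m}^\ast(Tu_m)=z_{n_m}^\ast(u_m)\geq\varepsilon/2$ for all large $m$, a contradiction. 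Therefore $y_n^\ast(Tx_n)\to 0$ and $T$ is almost weak $p$-convergent.

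I expect the delicate point to be the disjointification itself: one needs a disjoint sequence on which the functionals $T^\ast y_{n_m}^\ast$ stay bounded away from $0$ and which is \emph{dominated by} $(|x_n|)_n$, so that relative weak compactness of $\{x_n:n\in\N\}$ forces it to be weakly null — a plain application of Lemma \ref{lemaantigo} would produce a disjoint sequence unrelated to $(x_n)_n$ that need not be weakly null. Positivity of $T$ is exactly what lets us pass from $y_n^\ast(Tx_n)$ to the positive data $(z_n^\ast)_n$ in $E^\ast$ and the positive test vectors $(|x_n|)_n$ in $E$ on which the disjointification operates.
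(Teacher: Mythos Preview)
Your proof is correct and follows essentially the same route as the paper's: assume failure, run the Dodds--Fremlin disjointification (\cite[Lemma 4.35]{alip}) against the prescribed vectors $|x_n|$ to produce a disjoint sequence $(u_m)_m\subset\sol{\{x_n\}}$, invoke \cite[Theorem 4.34]{alip} to see that $(u_m)_m$ is weakly null, and derive a contradiction from the hypothesis. The only cosmetic differences are that the paper works with $|y_n^\ast|$ throughout rather than first reducing to positive $(y_n^\ast)_n$, and uses the shifted indexing $u_k$ paired with $|y_{n_{k+1}}^\ast|$ instead of your $u_m$ with $y_{n_m}^\ast$.
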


\begin{proof} Let us prove the nontrivial implication. Assuming that $T$ is not an almost weak $p$-convergent operator, there exist a weakly null sequence $(x_n)_n \subseteq E$, a disjoint weakly $p$-summable sequence $(y_n^\ast)_n \subseteq F^\ast$ and $\varepsilon > 0$  such that $|y_n^\ast(Tx_n)| \geq \varepsilon$ for all $n \in \N$. Since $(y_n^\ast)_n$ is disjoint weakly $p$-summable, $(|y_n^\ast|)_n$ is weakly $p$-summable sequence by \cite[Proposition 2.2]{fourie}, and, in particular $|y_n^\ast| \cvfe 0$ in $F^\ast$. Let $n_1 = 1$.
    As $|y_n^\ast|(T(4|x_{n_1}|))\longrightarrow 0$, there exists $n_2 > n_1$ such that $|y_{n_2}^\ast| (T(4 |x_{n_1}|)) < 1/2$. Again, since $|y_n^\ast|(T(4^2 \sum\limits_{j=1}^2 |x_{n_j}|)) \longrightarrow 0$, there exists $n_3 > n_2$ such that $|y_{n_3}^\ast| (T(4^2 \sum\limits_{j=1}^2 |x_{n_j}|)) < 1/2^2$. By induction, we construct a strictly increasing sequence $(n_k)_k \subseteq \N$ such that
    $$ |y_{n_{k+1}}^\ast| (T(4^k {\textstyle\sum\limits_{j=1}^k} |x_{n_j}|)) < \frac{1}{2^k} \text{~for every } k \in \N. $$
    Letting $x = \sum\limits_{k=1}^\infty 2^{-k} |x_{n_k}|$ and
    $$ u_k = \left ( |x_{n_{k+1}}| -4^k \textstyle\sum\limits_{j=1}^k |x_{n_j}| -2^{-k} x \right )^+, $$
    we obtain from \cite[Lemma 4.35]{alip} that $(u_k)_k$ is a positive disjoint sequence such that $u_k \leq |x_{n_{k+1}}|$ for every $k \in \N$. Since $A = \conj{x_n}{n \in \N}$ is relatively weakly compact and $(u_k)_k$ is a disjoint sequence contained in $\sol{A}$, we have $u_k \stackrel{\omega}{\longrightarrow} 0$ by \cite[Theorem 4.34]{alip}). By assumption, $ |y_{n_{k+1}}^\ast|(T(u_{k})) \longrightarrow 0$. For every $k \in \mathbb{N}$,
    \begin{align*}
        |y_{n_{k+1}}^\ast| (T(u_k))
        & \geq |y_{n_{k+1}}^\ast|(T(|x_{n_{k+1}}|)) - |y_{n_{k+1}}^\ast|(T(4^k {\textstyle\sum\limits_{j=1}^k} |x_{n_j}|)) - 2^{-k} |y_{n_{k+1}}^\ast|(T(x)) \\
        &\geq \varepsilon- \frac{1}{2^k} - 2^{-k} |y_{n_{k+1}}^\ast|(T(x)).
    \end{align*}
Letting $k \to \infty$ and using that $|y_{n_{k+1}}^\ast|(T(x)) \longrightarrow 0$ we get
$0 = \lim\limits_k |y_{n_{k+1}}^\ast| (T(u_k)) \geq \varepsilon, $ a contradiction which completes the proof.
%$ \displaystyle\liminf_{k \to \infty} |y_{n_{k+1}}^\ast| (T(u_k)) \geq \varepsilon, $ which contradicts $ |y_{n_{k+1}}^\ast|(T(u_{k})) \longrightarrow 0$.
\end{proof}

%Let $T \colon E \to F$ be a positive operator such that $T^\ast \colon F^\ast \to E^\ast$ is disjoint $p$-convergent. If $(x_n)_n \subseteq E$ is a weakly null sequence and if $(y_n^\ast)_n \subseteq F^\ast$ is a disjoint weakly $p$-summable sequence, we obtain that $\norma{T^\ast(y_n^\ast)} \longrightarrow 0$, and then
%$$ |y_n^\ast(T(x_n))| = |(T^\ast(y_n^\ast))(x_n)| \leq \norma{x_n}  \norma{T^\ast(y_n)} \longrightarrow 0$$
%because $\sup_{n \in \N} \norma{x_n} < \infty$. Therefore $T$ is an almost weak $p$-convergent operator.
%Let us see that the there exist almost weak $p$-convergent operators whose adjoint is not disjoint $p$-convergent:
%
%
%
%We now prove another equivalent condition to the thesis of Theorem \ref{teo4}.

 %%%In our next result we give necessary and sufficient conditions to establish when the adjoint of an almost weak $p$-convergent operator is disjoint $p$-convegrent.

%\textcolor{red}{Atenção. Se for verdade que todo operador $p$-convergente é almost weak $p$-convergente, então o teorema abaixo é menos geral que \cite[Theorem 5.11]{ali}.}

\begin{theorem} \label{teo5} The following are equivalent for two Banach lattices $E$ and $F$.\\
{\rm (a)} $E^\ast$ has order continuous norm or $F^\ast$ has the positive Schur property of order $p$.\\
{\rm (b)} The adjoint of every positive almost weak $p$-convergent operator from $E$ to $F$ is disjoint $p$-convergent.\\
{\rm (c)} A positive operator $T \colon E \to F$ is almost weak $p$-convergent if and only if $T^*$ is disjoint $p$-convergent.
    %%Let $E, F$ Banach lattices. The adjoint of every almost weak $p$-convergent operator $T: E \to F$ is disjoint $p$-convergent if and only if one of the following holds: \\
    %{\rm (a)} $E^{\ast}$ has order continuous norm.\\
    %{\rm (b)} $F^{\ast}$ has the positive Schur property of order $p$.
\end{theorem}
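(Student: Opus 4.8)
The plan is to prove the cycle (a)$\Rightarrow$(c)$\Rightarrow$(b)$\Rightarrow$(a). The implication (c)$\Rightarrow$(b) is trivial, since (b) is exactly the "only if" half of the equivalence asserted in (c). The implication (a)$\Rightarrow$(c) splits into the two halves of the biconditional. The direction "$T^*$ disjoint $p$-convergent $\Rightarrow$ $T$ almost weak $p$-convergent" is already Lemma \ref{nlemn} and needs no assumption on $E$ or $F$. So the real content of (a)$\Rightarrow$(c) is: assuming (a), show that a positive almost weak $p$-convergent $T\colon E\to F$ has $T^*$ disjoint $p$-convergent --- which is precisely statement (b). Hence (a)$\Rightarrow$(c) reduces to (a)$\Rightarrow$(b), and the whole theorem reduces to proving (a)$\Rightarrow$(b) and (b)$\Rightarrow$(a).

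For (a)$\Rightarrow$(b), let $T\colon E\to F$ be positive and almost weak $p$-convergent. I treat the two cases of (a) separately. If $F^*$ has the positive Schur property of order $p$, then any positive disjoint weakly $p$-summable $(y_n^*)_n$ in $F^*$ is norm null, so $\|T^*(y_n^*)\|\le \|T^*\|\,\|y_n^*\|\to 0$ and $T^*$ is disjoint $p$-convergent by \cite[Proposition 4.6]{fourie}. The substantive case is $E^*$ order continuous. Here I argue by contradiction using the machinery of Lemma \ref{equivalentes1}: if $T^*$ is not disjoint $p$-convergent, then by the equivalence (a)$\Leftrightarrow$(e) of Theorem \ref{equivalentes1} there exist a positive disjoint weakly $p$-summable sequence $(y_n^*)_n$ in $F^*$ and a bounded positive disjoint sequence $(x_n)_n$ in $E$ with $\liminf_n y_n^*(Tx_n) > 0$ (after passing to a subsequence, $y_{n_k}^*(Tx_k)\ge \alpha/4$). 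Now I invoke order continuity of $E^*$: by \cite[Theorem 2.4.14]{meyer}, bounded disjoint sequences in $E$ are weakly null, so $(x_n)_n$ is a disjoint weakly null sequence. Then Lemma \ref{lem2} (the characterization of almost weak $p$-convergence via disjoint weakly null sequences, applicable since $T$ is positive) forces $y_n^*(Tx_n)\to 0$, contradicting $\liminf_k y_{n_k}^*(Tx_k)\ge \alpha/4 > 0$. This proves $T^*$ is disjoint $p$-convergent.

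For (b)$\Rightarrow$(a), I argue by contraposition: assume $E^*$ does not have order continuous norm \emph{and} $F^*$ fails the positive Schur property of order $p$, and produce a positive almost weak $p$-convergent operator $T\colon E\to F$ whose adjoint is not disjoint $p$-convergent. The construction should closely mirror the proof of Theorem \ref{teo4}: from failure of order continuity of $E^*$ one extracts (via \cite[Theorem 2.4.2]{meyer}) a normalized positive disjoint order bounded sequence $(x_n^*)_n$ in $E^*$ dominated by some $x^*\in E^*$, together with $(x_n)_n\subset S_E^+$ with $x_n^*(x_n)\ge 1/2$; this gives a bounded positive $S\colon E\to \ell_1$, $S(x)=(x_n^*(x))_n$. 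From failure of the positive Schur property of order $p$ by $F^*$ one gets (via Lemma \ref{psp1}) a normalized positive disjoint weakly $p$-summable $(y_n^*)_n$ in $F^*$ with witnesses $(y_n)_n\subset S_F^+$, $y_n^*(y_n)\ge 1/2$, and a positive $R\colon \ell_1\to F$, $R((a_n)_n)=\sum_n a_n y_n$. Set $T=R\circ S$, a positive operator with $\|T(x)\|\le x^*(|x|)$ and $T^*(y_k^*)(x_k)\ge 1/4$, so $T^*$ is not disjoint $p$-convergent exactly as in Theorem \ref{teo4}. The new ingredient, and the main obstacle, is to verify that this $T$ is almost weak $p$-convergent --- in Theorem \ref{teo4} only order weak compactness was checked, which is weaker. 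To get almost weak $p$-convergence I use Lemma \ref{lem2}: it suffices to show $z_n^*(Tw_n)\to 0$ whenever $(w_n)_n$ is disjoint weakly null in $E$ and $(z_n^*)_n$ is disjoint weakly $p$-summable in $F^*$. Since $\|T(w_n)\|\le x^*(|w_n|)$ and $(|w_n|)_n$ is disjoint weakly null (using \cite[Proposition 2.2]{fourie} or a direct argument), one has $x^*(|w_n|)\to 0$, hence $\|T(w_n)\|\to 0$, and then $|z_n^*(Tw_n)|\le \|z_n^*\|\,\|T(w_n)\|\to 0$ because $(z_n^*)_n$ is bounded. This closes the contrapositive and completes the cycle. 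The only delicate point to get right is that the chosen $(w_n)_n$ being disjoint \emph{weakly null} (not merely order bounded) is exactly what Lemma \ref{lem2} requires, and that $(|w_n|)_n$ inherits weak nullity --- this is where passing through Lemma \ref{lem2} rather than the order-bounded characterization of order weak compactness is essential.
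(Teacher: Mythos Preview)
Your proposal is correct and follows essentially the same approach as the paper: the same reduction to (a)$\Leftrightarrow$(b), the same case split and use of Theorem \ref{equivalentes1} together with Lemma \ref{lem2} for (a)$\Rightarrow$(b), and the same construction borrowed from Theorem \ref{teo4} for (b)$\Rightarrow$(a), with Lemma \ref{lem2} supplying the almost weak $p$-convergence of $T$. The only cosmetic differences are that the paper argues (a)$\Rightarrow$(b) directly rather than by contradiction, and for the fact that $(|w_n|)_n$ is weakly null when $(w_n)_n$ is disjoint weakly null it cites \cite[Proposition 1.3]{wnukdual} rather than \cite[Proposition 2.2]{fourie} (the latter is stated for weakly $p$-summable sequences, though the idea adapts).
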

\begin{proof} Lemma \ref{nlemn}, which was derived from Lemma \ref{equivalentes}, gives the equivalence (b)$\Leftrightarrow$(c).

\medskip

\noindent (b)$\Rightarrow$(a) Assume, for sake of contradiction, that $E^\ast$ does not have order continuous norm and that $F^\ast$ fails to have the positive Schur property of order $p$. By the same argument used in the proof of Theorem \ref{teo4} there exist a positive disjoint order bounded sequence $(x_n^\ast)_n \subseteq E^\ast$, $\varphi \in E^\ast$ and a positive sequence $(x_n)_n \subseteq E$ such that $0 \leq x_n^\ast \leq \varphi$, $\norma{x_n^\ast} = \norma{x_n} = 1$ and $x_n^\ast(x_n) \geq 1/2$ for all $n \in \N$. Furthermore, $S(x) = (x_n^\ast(x))_n$ defines a positive linear operator $S \colon E \to \ell_1$ such that $\norma{S(x)} \leq \varphi(|x|)$ for every $x \in E$. As $F^\ast$ does not have the positive Schur property of order $p$, by Lemma \ref{psp1} there exists a positive disjoint weakly $p$-summable sequence $(y_n^\ast)_n$ in $F^\ast$ such that $\norma{y_n^\ast} = 1$ for every $n \in \N$. Moreover, for each $n \in \N$, there exists $y_n \in F$ such that $\norma{y_n} = 1$  and $y_n^\ast(y_n)  \geq 1/2$. Defining $T \colon E \to F$ by $T(x) = \sum\limits_{n=1}^\infty x_n^\ast(x) y_n$, we obtain that $T$ is a positive operator such that $\norma{T(x)} \leq \varphi(|x|)$
for every $x \in E$. We claim that $T$ is almost weak $p$-convergent.
To check this, let $(z_n)_n$ be a disjoint weakly null sequence in $E$ and  $(f_n)_n$ be a disjoint weakly $p$-summable sequence in $F^\ast$. We have
$$ |f_n(Tz_n)| \leq \norma{f_n} \cdot \norma{T (z_n)} \leq \sup_{n \in \N} \norma{f_n}\cdot \varphi(|z_n|) \longrightarrow 0 $$
 because $\sup\limits_{n \in \N}\norma{f_n} < \infty$  and $(|z_n|)_n$ is weakly null by \cite[Proposition 1.3]{wnukdual}. Lemma \ref{lem2} gives that $T$ is a almost weak $p$-convergent operator. To complete the reasoning, note that, since $(y_k^\ast)_k$  is a disjoint weakly $p$-summable sequence in $F$ such that
$$ y_k^\ast(T(x_k)) = \displaystyle\sum_{n=1}^{\infty} x_n^\ast(x_k)y_k^\ast(y_n) \geq x_k^\ast(x_k)y_k^\ast(y_k) \geq 1/4 $$
 for every $k \in \N$, we  have by Theorem \ref{equivalentes1} that $T^\ast$ is not disjoint $p$-convergent, which is a contradiction.

\medskip

\noindent (a)$\Rightarrow$(b) Let $T \colon E \to F$ be an almost weak $p$-convergent operator, $(y_n^\ast)_n$ be a disjoint weakly $p$-summable sequence in $F^\ast$ and $(x_n)_n$ be a bounded disjoint sequence in $E$.
If $E^\ast$ has order continuous norm, then the sequence $x_n \stackrel{\omega}{\longrightarrow} 0$ (see \cite[Theorem 2.4.14]{meyer}), and  $y_n^\ast(Tx_n) \longrightarrow 0$ by Lemma \ref{lem2}. So, Theorem \ref{equivalentes1} yields that $T^\ast$ is disjoint $p$-convergent. If $F^\ast$ has the positive Schur property of order $p$, then $\norma{y_n^\ast} \longrightarrow 0$, which implies that $$|y_n^\ast(T(x_n))| \leq \norma{y_n^\ast}\cdot \norma{T} \cdot \sup\limits_{n \in \N} \norma{x_n} \longrightarrow 0$$
because $\sup\limits_{n \in \N} \norma{x_n} < \infty$. Calling on Theorem \ref{equivalentes1} once again we conclude that $T^\ast$ is disjoint $p$-convergent.
\end{proof}

It is important to make clear the relationships between \cite[Theorem 5.11]{ali} and Theorem \ref{teo5}. On the one hand, as mentioned before, \cite[Theorem 5.11]{ali} applies only for Banach lattices $E$ and $F$ for which every $p$-convergent operator from $E$ to $F$ is almost weak $p$-convergent; and now we add that, in this case, \cite[Theorem 5.11]{ali} is better than Theorem \ref{teo5}. On the other hand, Theorem \ref{teo5} applies for all Banach lattices $E$ and $F$. So, Theorem \ref{teo5} is useful only if there exist Banach lattices $E$ and $F$ such that $E^\ast$ has order continuous norm or $F^\ast$ has the positive Schur property of order $p$ and positive disjoint $p$-convergent operators from $E$ to $F$ that fail to be almost weak $p$-convergent. Next we show that such lattices and operators do exist, thus establishing the usefulness of Theorem \ref{teo5}.

\begin{example}\rm Taking $p = \frac32$, since $p^* = 3 > \frac32$, ${\rm id}_{\ell_{\frac32}}$ is $\frac32$-convergent by \cite[Example 3.9]{fourie}. Using that the sequence $(e_n)_n$ of canonical unit vectors is weakly null in $\ell_{\frac32}$, the sequence
$(e_n^{\ast})_n$ of the corresponding coordinate functionals is disjoint weakly $\frac32$-summable in $(\ell_{\frac32})^* = \ell_3$ and $e_n^*(e_n) = 1$ for every $n$, it follows that ${\rm id}_{\ell_{\frac32}}$ fails to be almost weak $\frac32$-convergent. Of course $(\ell_{\frac32})^* = \ell_3$ has order continuous norm.
\end{example}

We finish the paper with a discussion about \cite[Corollary 5.12]{ali}. It is not difficult to see that, in the way it is stated, this result is false (again the counterexample is ${\rm id}_{\ell_{\frac32}}$). We believe the correct statement concerns positive almost {\it weak} $p$-convergent operators $T \colon E \to F$ and that, in this fashion, the corollary somehow follows from \cite[Theorem 5.11]{ali}. Anyway, this form of the corollary can be obtained directly from Theorem \ref{teo5}. Indeed,  as Banach lattices with the positive Schur property of order $p$ have order continuous norms, applying Theorem \ref{teo5} twice we get the following:%the following is an immediate application of Theorem \ref{teo5}.

\begin{corollary} The norm of the dual $E^*$ of a Banach lattice $E$ is order continuous if and only if the adjoint of every positive almost weak $p$-convergent operator $T \colon E \to E$ is disjoint $p$-convergent.\end{corollary}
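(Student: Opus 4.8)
The plan is to read this corollary off from Theorem~\ref{teo5} specialized to the case $F = E$, combined with the elementary fact that the positive Schur property of order $p$ forces order continuity of the norm.

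First I would record that fact: if a Banach lattice $X$ has the positive Schur property of order $p$, then $X$ has order continuous norm. Indeed, let $(x_n)_n$ be an order bounded disjoint sequence in $X$, say $|x_n| \leq x$. Then the positive disjoint sequence $(|x_n|)_n$ is order bounded, hence weakly $1$-summable -- and in particular weakly $p$-summable -- by the computation carried out in the proof of Lemma~\ref{lema1} (for every $x^* \in X^*$ and every $m \in \N$ one has $\sum_{k=1}^{m}|x^*|(|x_k|) = |x^*|\big(\bigvee_{k=1}^{m}|x_k|\big) \leq |x^*|(x)$). The positive Schur property of order $p$ then gives $\norma{x_n} = \norma{|x_n|} \longrightarrow 0$, so every order bounded disjoint sequence in $X$ is norm null; by \cite[Theorem 2.4.2]{meyer} this means precisely that $X$ has order continuous norm.

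Next I would invoke the equivalence (a)$\Leftrightarrow$(b) of Theorem~\ref{teo5} with $F = E$: \emph{$E^*$ has order continuous norm or $E^*$ has the positive Schur property of order $p$ if and only if the adjoint of every positive almost weak $p$-convergent operator $T \colon E \to E$ is disjoint $p$-convergent.} Applying the fact of the previous paragraph to $X = E^*$, the disjunction ``$E^*$ has order continuous norm or $E^*$ has the positive Schur property of order $p$'' collapses to the single statement ``$E^*$ has order continuous norm''. Substituting this into the biconditional just quoted yields exactly the assertion of the corollary.

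I do not expect any genuine obstacle here: the whole content is carried by Theorem~\ref{teo5}, and the only supplementary input is the routine implication ``positive Schur property of order $p$ $\Longrightarrow$ order continuous norm'', whose proof merely recycles the order-boundedness-implies-weak-$p$-summability argument that already appears in Lemma~\ref{lema1} and in Examples~\ref{djp2}.
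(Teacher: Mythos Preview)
Your proof is correct and follows essentially the same route as the paper: the paper also notes that Banach lattices with the positive Schur property of order $p$ have order continuous norm and then applies Theorem~\ref{teo5} with $F=E$ so that the disjunction in condition~(a) collapses to the single requirement that $E^*$ have order continuous norm. Your only addition is that you spell out the (routine) argument for the implication ``positive Schur property of order $p$ $\Rightarrow$ order continuous norm'', which the paper merely asserts.
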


%\begin{proof} \textcolor{red}{DEPOIS TIRAMOS}
%
%    Assume first that $E^\ast$ has order continuous norm and let $T \colon E \to E$ be an almost weak $p$-convergent operator. By Theorem \ref{teo5}, we obtain that $T$ is disjoint $p$-convergent.
%
%
%    Assume now that the adjoint of every almost weak $p$-convergent operator $T \colon E \to E$ is disjoint $p$-convergent. By Theorem \ref{teo5}, we get that or $E^\ast$ has order continuous norm or $E^\ast$ has the positive Schur property of order $p$. In both cases, $E^\ast$ has order continuous norm.
%\end{proof}

%citar o Ardakani, Ann Funct Analy como contribuicao recente para os operadores disjoint $p$-convergentes. ????????

\bigskip

\noindent G. Botelho and V. C. C. Miranda\\
Faculdade de Matem\'atica\\
Universidade Federal de Uberl\^andia\\
38.400-902 -- Uberl\^andia -- Brazil\\
e-mails: botelho@ufu.br, colferaiv@gmail.com

\medskip

\noindent L. A. Garcia\\
Instituto de Ci\^encias Exatas\\
Universidade Federal de Minas Gerais\\
31.270-901 – Belo Horizonte – Brazil\\
e-mail: garcia$\_$1s@hotmail.com

\end{document}